\documentclass[twoside,a4paper,reqno,11pt]{amsart}
\usepackage{amsfonts, amsbsy, amsmath, amsthm, amssymb, latexsym, verbatim, enumerate}
\usepackage{mathrsfs}
\usepackage[top=30mm,right=30mm,bottom=30mm,left=30mm]{geometry}

\usepackage{hyperref}
\usepackage{amssymb,amsfonts,amsmath,amsopn,amstext,amscd,latexsym,amsthm}
\usepackage{bbm}
\usepackage[all,cmtip]{xy}

\usepackage{enumerate}
\usepackage[shortlabels]{enumitem}

\theoremstyle{plain}

\begin{document}

\def\a{\alpha}
 \def\b{\beta}
 \def\e{\epsilon}
 \def\d{\delta}
  \def\D{\Delta}
 \def\c{\chi}
 \def\k{\kappa}
 \def\g{\gamma}
 \def\Ind{\mathrm{Ind}}
 \def\t{\tau}
\def\ti{\tilde}
 \def\N{\mathbb N}
 \def\Q{\mathbb Q}
 \def\Z{\mathbb Z}
 \def\C{\mathbb C}
 \def\F{\mathbb F}
 \def\ovF{\overline\F}
 \def\bfN{\mathbf N}
 \def\cG{\mathcal G}
 \def\cT{\mathcal T}
 \def\cX{\mathcal X}
 \def\cY{\mathcal Y}
 \def\cC{\mathcal C}
 \def\cD{\mathcal D}
 \def\cZ{\mathcal Z}
 \def\cO{\mathcal O}
 \def\cW{\mathcal W}
 \def\cL{\mathcal L}
 \def\bfC{\mathbf C}
 \def\bfZ{\mathbf Z}
 \def\bfO{\mathbf O}
 \def\G{\Gamma}
 \def\bO{\boldsymbol{\Omega}}
 \def\bgo{\boldsymbol{\omega}}
 \def\go{\rightarrow}
 \def\do{\downarrow}
 \def\ra{\rangle}
 \def\la{\langle}
 \def\fix{{\rm fix}}
 \def\ind{{\rm ind}}
 \def\rfix{{\rm rfix}}
 \def\diam{{\rm diam}}
 \def\uni{{\rm uni}}
 \def\diag{{\rm diag}}
 \def\Irr{{\rm Irr}}
 \def\Syl{{\rm Syl}}
 \def\Out{{\rm Out}}
 \def\Tr{{\rm Tr}}
 \def\M{{\cal M}}
 \def\cE{{\mathcal E}}
\def\td{\tilde\delta}
\def\tx{\tilde\xi}
\def\DC{D^\circ}
\def\ext{{\rm Ext}}
\def\res{{\rm Res}}
\def\Ker{{\rm Ker}}
\def\hom{{\rm Hom}}
\def\End{{\rm End}}
 \def\rank{{\rm rank}}
 \def\soc{{\rm soc}}
 \def\Cl{{\rm Cl}}
 \def\A{{\sf A}}
 \def\sP{{\sf P}}
 \def\sQ{{\sf Q}}
 \def\SSS{{\sf S}}
  \def\SQ{{\SSS^2}}
 \def\St{{\sf {St}}}
 \def\p{\ell}
 \def\ps{\ell^*}
 \def\SC{{\rm sc}}
 \def\supp{{\sf{supp}}}
  \def\cR{{\mathcal R}}
 \newcommand{\tw}[1]{{}^#1}

\def\Der{{\rm Der}}
 \def\Sym{{\rm Sym}}
 \def\PSL{{\rm PSL}}
 \def\SL{{\rm SL}}
 \def\Sp{{\rm Sp}}
 \def\GL{{\rm GL}}
 \def\SU{{\rm SU}}
 \def\GU{{\rm GU}}
 \def\SO{{\rm SO}}
 \def\PO{{\rm P}\Omega}
 \def\Spin{{\rm Spin}}
 \def\PSp{{\rm PSp}}
 \def\PSU{{\rm PSU}}
 \def\PGL{{\rm PGL}}
 \def\PGU{{\rm PGU}}
 \def\Iso{{\rm Iso}}
 \def\Stab{{\rm Stab}}
 \def\GO{{\rm GO}}
 \def\Ext{{\rm Ext}}
 \def\E{{\cal E}}
 \def\l{\lambda}
 \def\ve{\varepsilon}
 \def\Lie{\rm Lie}
 \def\s{\sigma}
 \def\O{\Omega}
 \def\o{\omega}
 \def\ot{\otimes}
 \def\op{\oplus}
 \def\oc{\overline{\chi}}
 \def\pf{\noindent {\bf Proof.$\;$ }}
 \def\Proof{{\it Proof. }$\;\;$}
 \def\no{\noindent}
 %{\quad\sqbox\vspace{1mm}\par}
\def\hal{\unskip\nobreak\hfil\penalty50\hskip10pt\hbox{}\nobreak
 \hfill\vrule height 5pt width 6pt depth 1pt\par\vskip 2mm}

 \renewcommand{\thefootnote}{}

\newtheorem{theorem}{Theorem}
 \newtheorem{thm}{Theorem}[section]
 \newtheorem{prop}[thm]{Proposition}
 \newtheorem{conj}[thm]{Conjecture}
 \newtheorem{question}[thm]{Question}
 \newtheorem{lem}[thm]{Lemma}
 \newtheorem{lemma}[thm]{Lemma}
 \newtheorem{defn}[thm]{Definition}
 \newtheorem{cor}[thm]{Corollary}
 \newtheorem{coroll}[theorem]{Corollary}
\newtheorem*{corB}{Corollary}
 \newtheorem{rem}[thm]{Remark}
 \newtheorem{exa}[thm]{Example}
 \newtheorem{cla}[thm]{Claim}

\numberwithin{equation}{section}
\parskip 1mm

\title{Sectional rank and Cohomology}

\author[Guralnick]{Robert M. Guralnick}
\address{R.M. Guralnick, Department of Mathematics, University of Southern California, Los Angeles,
CA 90089-2532, USA}
\email{guralnic@usc.edu}
 
\author[Tiep]{Pham Huu Tiep}
\address{P. H. Tiep, Department of Mathematics, Rutgers University, Piscataway, NJ 08854, USA}
\email{tiep@math.rutgers.edu}

%%\dedicatory{Dedicated to our good friend Michel Brou\'e}
 
\date{\today}

\thanks{The first author was partially supported by the NSF
grants DMS-1600056.
The second author was partially supported by the NSF grant DMS-1840702.
The paper is partially based upon work 
supported by the NSF under grant DMS-1440140 while both authors were in residence at 
the Mathematical Sciences Research Institute in Berkeley, California, during the Spring 2018
semester. It is a pleasure to thank the Institute for support, hospitality, and stimulating environment.}

\thanks{The authors are grateful to Radha Kessar for bringing 
this question to their attention, and to her, Meinolf Geck, and Raphael Rouquier for 
helpful discussions.}

\begin{abstract}
Donovan's conjecture implies a bound on the dimensions of cohomology groups
in terms of the size of a Sylow $p$-subgroup and we give a proof of 
a stronger bound (in terms of sectional $p$-rank) for $\dim H^1(G,V)$.   We also prove a reduction
theorem for higher cohomology. 
\end{abstract}

\maketitle

%%% Ext^1 big example.  mention H2.  

 %keywords
 % \subjclass[2010] 
%\tableofcontents

\section{Introduction}

Let $G$ be a finite group, $p$ a prime and $k$ an algebraically closed field of characteristic $p$.
Donovan's conjecture (cf. \cite{K}) asserts that for a fixed $p$-group $D$, 
there are only finitely many
blocks $B$  of any group algebra $kG$ with defect group $D$ up to Morita equivalence.

A trivial consequence of this conjecture is that there is
a bound on the  dimension of $\Ext$-groups between irreducible
modules (depending only on the defect group of the block
containing the irreducibles). 

Our main result considers what happens for the projective cover of the trivial module $k$
and $H^1$ under a weaker condition, where we do not fix the (isomorphism type of) Sylow $p$-subgroups 
but only their sectional $p$-rank.  Recall that the {\it sectional $p$-rank} of a finite group $G$ is the maximal rank 
of an elementary abelian group isomorphic to $L/K$ for some subgroups $K \lhd L$ of $G$.  
Even considering the case that $G=P$ is cyclic, one sees that there is
no upper bound on the  composition length of the projective cover of $k$.     We do prove:

%% bound independently of $p$
\begin{thm} \label{main} Let $G$ be a finite group, $p$ a prime and $k$ an algebraically closed
field of characteristic $p$.   Let $r$ be the sectional $p$-rank of $G$.   There exists a constant
$C=C(p,r)$ such that if $J$ is the radical of the projective cover of the trivial $G$-module $k$, then
$J/J^2$ is a direct sum of at most $C$ irreducible $kG$-modules. 
\end{thm}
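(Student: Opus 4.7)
The number of irreducible summands (with multiplicity) of $J/J^2$ equals
\[
N(G)\ :=\ \sum_{S}\dim_k H^1(G,S),
\]
where $S$ runs over the isomorphism classes of simple $kG$-modules, because the multiplicity of $S$ in the semisimple module $J/J^2$ equals $\dim\Ext^1_{kG}(k,S) = \dim H^1(G,S)$. The theorem amounts to showing $N(G)\leq C(p,r)$.

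The plan is a two-step structural reduction followed by an appeal to known CFSG-based bounds on first cohomology of finite simple groups. First, let $N=O_{p'}(G)$. Since $p\nmid|N|$, the five-term inflation--restriction sequence combined with the vanishing of $H^1(N,-)$ yields $H^1(G,S)\cong H^1(G/N,S^N)$, and simplicity forces $S^N\in\{0,S\}$. Thus $N(G)=N(G/N)$. Since the sectional $p$-rank is preserved under passage to $G/N$ (any elementary abelian $p$-section of $G/N$ lifts to one of $G$, and conversely $L\cap N\leq K$ for any elementary abelian $p$-section $L/K$ of $G$), we may assume $O_{p'}(G)=1$. Now pick a minimal normal subgroup $M$ of $G$. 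Either $M$ is an elementary abelian $p$-group of rank at most $r$, or $M=T_1\times\cdots\times T_m$ is a direct product of isomorphic nonabelian simple groups of order divisible by $p$. For each simple $kG$-module $S$ the five-term sequence gives
\[
\dim H^1(G,S)\ \leq\ \dim H^1(G/M,S^M)\ +\ \dim\bigl(H^1(M,S)^{G/M}\bigr),
\]
and summing over $S$ splits the estimate into an ``inflation sum'' (bounded by $N(G/M)$, hence by induction on $|G|$, since $G/M$ has strictly smaller order and sectional $p$-rank still at most $r$) and a ``restriction sum''.

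The restriction sum is easy in the abelian case (a): since $M$ is a normal $p$-subgroup, it acts trivially on every simple $kG$-module, so $H^1(M,S)=M^*\otimes_k S$ as $k[G/M]$-modules, and a standard computation gives
\[
\sum_S \dim\bigl(H^1(M,S)^{G/M}\bigr)\ =\ \sum_S \dim\mathrm{Hom}_{k[G/M]}(M,S)\ \leq\ \dim_k M \leq r.
\]
The hard case is (b). Here Clifford theory parameterizes simple $kG$-modules in terms of $G$-orbits of simple $kM$-modules together with projective representations of their stabilizers, and the K\"unneth theorem reduces $H^1(M,V)$ for a simple $kM$-module $V$ to $H^1(T,V')$ for a single simple factor $T$ and a simple $kT$-module $V'$. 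This bounds the restriction sum by a controlled multiple of $\sum_{V'}\dim H^1(T,V')$, which in turn is bounded in terms of $p$ and the $p$-rank of $T$ (hence of $G$) by CFSG-based results on first cohomology of finite simple groups. The main obstacle is the careful bookkeeping that shows that all multiplicative factors introduced---the outer $p$-part of $\Out(T)$, the permutation action of $G/M$ on the $T_i$, and the number $m$ of components---are themselves bounded by the sectional $p$-rank of $G$; this is precisely where the strength of the sectional rank hypothesis, as opposed to a mere Sylow rank bound, is exploited.
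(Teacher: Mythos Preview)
Your reduction has a genuine gap: the induction on $|G|$ does not close.  From the five-term sequence you obtain
\[
N(G)\ \le\ N(G/M)\ +\ \sum_S \dim\bigl(H^1(M,S)^{G/M}\bigr),
\]
and you bound the second summand by some $R(p,r)$.  But if the inductive hypothesis gives $N(G/M)\le C(p,r)$, your inequality only yields $N(G)\le C(p,r)+R(p,r)$, which is strictly larger than $C(p,r)$.  Unwinding the recursion, your bound becomes the sum of the restriction contributions over all steps of a chief series of $G/O_{p'}(G)$, and that series can be arbitrarily long even when the sectional $p$-rank is $1$.  Concretely, take $G=C_{p^a}$: at each of the $a$ steps $M\cong C_p$, your ``easy'' case~(a) contributes $1$, and the method produces only $N(C_{p^a})\le a$, whereas in fact $N(C_{p^a})=1$.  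So no uniform $C(p,r)$ emerges.  The sectional-rank hypothesis does \emph{not} bound the number of abelian $p$-chief factors, only the rank of each one (and, via a separate argument, the number of \emph{split} ones or the number of $G$-isomorphism types).

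The paper avoids this accumulation by decoupling the two halves of the problem.  First it bounds $\dim H^1(G,V)$ for each irreducible $V$ directly: apply inflation--restriction once with $Q=\mathbf{C}_G(V)$ to reduce to $V$ faithful (the non-faithful contribution $\hom_G(Q/[Q,Q]Q^p,V)$ is already $\le r$), and then invoke the reduction of \cite{Gu2} to pass to a simple subnormal section, where \cite{GT} applies.  Second, and this is the key step you are missing, it bounds the \emph{number} of $V$ with $H^1(G,V)\ne 0$ by splitting into two cases: either $V$ occurs as a $p$-chief factor of $G$, and there are only boundedly many such up to $G$-isomorphism (this is the content of Lemma~2.3 and the remark following it); or $V$ does not, in which case a result of Aschbacher--Guralnick gives $H^1(G,V)=H^1(G/\mathbf{C}_G(V),V)$, so one is immediately in the faithful situation handled by Lemma~2.4.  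The point is that passing to $G/\mathbf{C}_G(V)$ is a single jump rather than a walk down a chief series, so nothing accumulates.
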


We conjecture that the constant can be chosen to depend only on the sectional 
$p$-rank and not on the prime $p$. The proof we give shows that the only obstruction to proving this
is the case of simple groups.     We first prove a reduction to the case
of simple groups.   The sectional rank assumption implies that (for a fixed $p$)
aside from finitely many simple groups,  it suffices to consider cross characteristic modules 
of finite simple groups of Lie type and of bounded rank.
We then  use the main result of \cite{GT} which essentially proves
the theorem in that case.   The results of \cite{GT} shows that the constant $C$ can be chosen
to be $|W| + e$ where $W$ is the
Weyl group and $e$ is the twisted rank of $G$.   We improve
this result in Section \ref{sec:GT}. 

We also conjecture that this is true for all projective 
indecomposable modules for $G$ (assuming bounded sectional $p$-rank of the 
defect group of the block). 
Some evidence for this follows from results of Gruber and 
Hiss \cite{GHi} about classical
groups (but with restrictions on  the primes). 

There are also some related results of  Malle and Robinson \cite{MR} aimed towards proving
their conjecture that the number of simple modules in a given $p$-block is at most 
$p^r$ where $r$ is the sectional rank of the defect groups of the block.   One cannot hope to bound
this number independently of $p$. 

A restatement of Theorem \ref{main} is the following:

\begin{cor} \label{maincor}
Let $G$ be a finite group, $p$ a prime and $k$ an algebraically closed
field of characteristic $p$.   Let $r$ be the sectional $p$-rank of $G$.   
Then there exist constants $A(p,r)$ and $B(p,r)$ such that
\begin{enumerate}[\rm(i)]
\item the number of irreducible $kG$-modules $V$ with $H^1(G,V) \ne 0$
is at most $A(p,r)$; and
\item  if $V$ is an irreducible $kG$-module, then $\dim H^1(G,V) \le B(p,r)$.
\end{enumerate}
\end{cor}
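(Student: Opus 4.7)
The plan is to deduce both parts of Corollary \ref{maincor} directly from Theorem \ref{main} by translating $H^1$ into data about the projective cover of the trivial module. First I would invoke the standard identification, valid for any irreducible $kG$-module $V$,
\[
H^1(G,V) \;\cong\; \Ext^1_{kG}(k,V),
\]
together with the basic fact from representation theory of finite-dimensional algebras that $\dim_k \Ext^1_{kG}(k,V)$ equals the multiplicity of $V$ as a summand of the semisimple module $J/J^2$, where $J$ is the radical of the projective cover of $k$. (This is the standard reading off of $\Ext^1$ from the first layer of the minimal projective resolution of the trivial module.)

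Concretely, I would write
\[
J/J^2 \;\cong\; \bigoplus_{i} W_i^{m_i}
\]
with the $W_i$ pairwise non-isomorphic simple $kG$-modules and each $m_i \ge 1$. Then an irreducible $V$ satisfies $H^1(G,V) \ne 0$ exactly when $V \cong W_i$ for some $i$, and in that case $\dim H^1(G,V) = m_i$. By Theorem \ref{main}, the total number of summands $\sum_i m_i$ is at most $C(p,r)$. This single bound simultaneously controls the number of distinct $W_i$ (giving (i) with $A(p,r):=C(p,r)$) and each individual multiplicity $m_i$ (giving (ii) with $B(p,r):=C(p,r)$).

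There is essentially no obstacle here: all of the content has been absorbed into Theorem \ref{main}, and the corollary is a cosmetic repackaging. The only point requiring a moment of care is the correct parsing of the theorem's wording, namely that ``direct sum of at most $C$ irreducible $kG$-modules'' refers to the number of simple summands of $J/J^2$ counted \emph{with} multiplicity; absent that reading, the bound on each $m_i$ would not be automatic. With this understood, one can take $A(p,r) = B(p,r) = C(p,r)$ at no additional cost.
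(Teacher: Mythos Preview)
Your deduction is mathematically sound: the identification $H^1(G,V)\cong\Ext^1_{kG}(k,V)$ together with the fact that $\dim\Ext^1_{kG}(k,V)$ equals the multiplicity of $V$ in $J/J^2$ are correct, and Theorem~\ref{main} does imply Corollary~\ref{maincor} with $A(p,r)=B(p,r)=C(p,r)$. This is precisely the (easy) equivalence the paper has in mind when it calls the Corollary a ``restatement'' of the Theorem.

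The problem is that in the paper the implication runs the other way: Theorem~\ref{main} has no independent proof. The paper establishes Corollary~\ref{maincor} \emph{directly} and then observes that Theorem~\ref{main} follows by the very equivalence you wrote down. So invoking Theorem~\ref{main} as a black box here is circular. The paper's direct argument for the Corollary is as follows. Part~(ii) is Corollary~\ref{h1bound}: inflation--restriction bounds the contribution from $\bfC_G(V)$ by $s$ and reduces to the faithful case, where \cite{Gu2} further reduces to $\dim H^1(S,W)$ for a simple subnormal subgroup $S$ and an irreducible $kS$-submodule $W$, which Theorem~\ref{GT} handles. Part~(i) splits into two cases: irreducible $V$ that occur as split chief $p$-factors of $G$ are bounded in number by Lemma~\ref{lemma1}; for the remaining $V$, \cite[2.10]{AG} gives $H^1(G,V)=H^1(G/\bfC_G(V),V)$, and Lemma~\ref{lemma2} bounds the number of such $V$ with nonzero $H^1$.
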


If one works with indecomposable modules, it is easy to see,
using the Green correspondence, that 
the problem reduces to the case that the Sylow $p$-subgroups are normal.
However,  there are indecomposable $P$-modules $V$ with arbitrarily large
$\dim H^1(P,V)$ for most $p$-groups $P$. 
The one case where this does yield information is when 
$G$ has a cyclic Sylow $p$-subgroup  (i.e. the sectional $p$-rank is $1$).  It is well
known (using the Green correspondence) 
that $\dim H^n(G,V) \le 1$ for an {\it indecomposable} module $V$ in
characteristic $p$ (cf.  \cite[Lemma 3.5]{GKKL}).

In \cite{Gu1}, the first author asked whether there is a universal constant $C$ such that
$\dim H^1(G,V) < C$ for $V$ any {\it faithful} absolutely irreducible
$G$-module with $G$ a finite group.   This is still open but likely false
(see \cite{Lu} for examples with very large $\dim H^1(G,V)$).   The existence
of absolutely irreducible modules for simple groups with large first cohomology group
depends on the validity of Lusztig's conjecture and on knowing that certain coefficients of 
Kazhdan-Lusztig polynomials can be very large 
(this gives examples for groups of  Lie type and modules in the natural characteristic).   
In particular, there are no known examples in small characteristic. 

Of course, $\dim H^1(G,k)$ can be arbitrarily large but is bounded if the sectional rank
of the Sylow $p$-subgroups is bounded (indeed, it is bounded in terms of the 
Frattini quotient of a Sylow $p$-subroup).   Thus, one needs to assume faithfulness
or some condition on the Sylow $p$-subgroups
to get upper bounds.  For faithful absolutely irreducible modules, the upper bounds for
$\dim H^1$ reduce to the case of finite simple groups.   It is known \cite{CPS, GT} that for
$G$ a finite simple groups of Lie type of bounded rank $s$, there is a bound
$\dim H^1(G,V) < C(s)$ for $V$ any absolutely irreducible $kG$-module.  
 
 There is a recent paper \cite{EL} giving a reduction in the case of 
abelian defect groups and proving Donovan's conjecture when $p=2$ and
the defect group is abelian.   There are also reductions to quasisimple
groups \cite{Du} (in the case of nonabelian defect groups, the reduction is in terms
of Cartan matrices rather than Morita equivalence). 

Consider the following question, where $|G|_p$ denotes the $p$-part of the order $|G|$.

\begin{question} \label{cohn}   Does there exist a constant $C=C(r,n)$ such that 
$\dim  H^n(G,V) \le C$ for any finite group $G$  with $|G|_p \le p^r$
and  $V$ an irreducible $kG$-module ?
\end{question} 

One can ask whether there is such a bound in terms of sectional rank
(and perhaps the constant depends on $p$ as well).  
    Likely this can be reduced to
two questions.  The first is whether this holds for finite simple groups.
The second is whether there is a bound on $\dim H^n(P,k)$ for a $p$-group
$P$ (in terms of sectional rank).   There is a result of Quillen (see \cite{AE}
for a generalization to any module) 
showing that the growth rate of $H^n(P,k)$ is determined by the maximal rank
of an elementary abelian subgroup of $P$.

We do reduce Question \ref{cohn} to the case of simple groups.  

\begin{thm}  \label{red}   Let $k$ be an algebraically closed field of characteristic
$p$.   Let $G$ be a finite group with $|G|_p \leq p^r$.  If there exists 
a constant $C=C(p, r, n)$ such that $\dim H^n(S,V) \le C$ for every finite simple
group $S$ and $V$ any irreducible $kS$-module,  then the same is true for any finite group $G$ (with possibly a different constant). 
\end{thm}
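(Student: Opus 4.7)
The plan is to induct on $|G|$, combining the Lyndon--Hochschild--Serre (LHS) spectral sequence with the generalized Fitting subgroup structure and Clifford theory. I will repeatedly use the observation that for any normal $p$-subgroup $N \lhd G$, $V|_N$ is semisimple by Clifford's theorem, and since the only simple $kN$-module is trivial ($N$ is a $p$-group), $N$ acts trivially on $V$. In particular, $V$ automatically descends along every quotient by a normal $p$-subgroup, though the cohomology still sees the extension.

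First, I would strip $O_{p'}(G)$ and $O_p(G)$. If $N := O_{p'}(G) \ne 1$, the LHS spectral sequence for $N \lhd G$ collapses by coprimality to $H^n(G, V) = H^n(G/N, V^N)$, and Clifford forces $V^N \in \{0, V\}$; either way we reduce to $G/N$, of strictly smaller order. Similarly, if $P := O_p(G) \ne 1$ then $V$ descends to $\bar G := G/P$ and LHS yields
\[
E_2^{a,b} = H^a(\bar G, H^b(P,k) \otimes V) \Rightarrow H^{a+b}(G,V)
\]
with diagonal $\bar G$-action. Since $\dim H^b(P,k)$ is bounded in terms of $p$ and $r$, and its $\bar G$-composition factors factor through $\bar G / C_{\bar G}(P) \hookrightarrow \mathrm{Aut}(P)$ (a group of order bounded in terms of $p, r$), a composition-series filtration together with induction on $|G|$ applied to $\bar G$ bounds each $E_2^{a,b}$-term.

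Next, I would reduce to $p$-components and apply the simple-group hypothesis. After the above, $F^*(G) = E(G) = L_1 \times \cdots \times L_s$ is a direct product of non-abelian simple groups ($Z(E(G)) \le O_p(G) = 1$). The product of any $G$-orbit of $p'$-simple components would be a $G$-normal $p'$-subgroup contradicting $O_{p'}(G) = 1$; hence every $L_i$ has $p \mid |L_i|$, giving $s \le r$. Passing via Shapiro's lemma and Clifford theory to the inertia group of an irreducible $E(G)$-summand of $V$, I may assume $V|_{E(G)} = e \cdot W$ with $W = X_1 \otimes \cdots \otimes X_s$. Künneth then yields
\[
\dim H^b(E(G), W) = \sum_{b_1 + \cdots + b_s = b} \prod_{i=1}^{s} \dim H^{b_i}(L_i, X_i),
\]
with each factor bounded via the hypothesis applied to the simple $L_i$. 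The outer LHS spectral sequence $H^a(G/E(G), H^b(E(G), V)) \Rightarrow H^{a+b}(G, V)$, together with induction on $|G|$ applied to $G/E(G)$ (strictly smaller, with $|G/E(G)|_p \le p^r$), will complete the bound.

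The hard part will be the $O_p$-stripping step: bounding $H^a(\bar G, Y \otimes V) = \Ext^a_{\bar G}(Y^*, V)$ for a simple $\bar G$-module $Y^*$ and simple $V$ is a priori stronger than bounding $H^a(\bar G, V)$. I expect to resolve this by exploiting that $Y^*$ factors through a quotient of $\bar G$ of order bounded in terms of $p, r$, so an inflation--restriction argument via $C_{\bar G}(P) \lhd \bar G$ (using that $\bar G$-orbits on $\bar C$-composition factors of $V|_{\bar C}$ then yield Shapiro reductions to subgroups of bounded $p$-part) reduces to cohomology of simple modules of smaller groups, where induction applies. A secondary technical issue is that modular Clifford theory can produce projective representations of $G/E(G)$; these are handled by passing to a central extension whose $p$-part grows by at most a bounded factor, preserving the induction.
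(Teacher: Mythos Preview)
Your approach is genuinely different from the paper's: you go bottom-up through $F^*(G)$, while the paper goes top-down via a maximal normal subgroup $N$ and the simple quotient $S=G/N$ (inducting on $n+r$, not on $|G|$). Both routes use the Holt/LHS inequality and the Clifford factorisation $H^j(N,V)\cong H^j(N,W_1)\otimes W_2$.

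There is, however, a real gap in your $E(G)$-step. After Clifford you must bound
\[
H^{a}\bigl(G/E(G),\,H^{b}(E(G),W)\otimes U\bigr),
\]
with $\dim H^{b}(E(G),W)$ bounded and $U$ irreducible for (a cover of) $G/E(G)$. This is an $\Ext^{a}$ between two irreducibles, one of bounded dimension, not an $H^{a}$ with irreducible coefficients; your inductive hypothesis does not supply it, and the composition length of $Y\otimes U$ is not obviously bounded in terms of $\dim Y$ alone. You recognise the analogous difficulty in the $O_p$-step and propose to fix it via the fact that the small factor there factors through the bounded quotient $\bar G\to\mathrm{Aut}(P)$. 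That fix does not transfer: after stripping $O_p$ and $O_{p'}$ one has $C_G(E(G))=Z(E(G))=1$, so $G/E(G)\hookrightarrow \Out(E(G))$ acts faithfully, and field automorphisms make $\Out(L_i)$ arbitrarily large, so the action on $H^{b}(E(G),W)$ does not factor through any quotient of bounded order.

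The paper sidesteps exactly this by working at the top. Once $p\mid |S|$ one has $|N|_p<|G|_p$, so induction on $r$ bounds all $\dim H^{j}(N,W_1)$ by some constant $C_1$. Then either $S$ has bounded order (handle by restricting to a Sylow $p$-subgroup of $G/N$ and using the case $|G/N|=p$), or $S$ is a large simple group of Lie type in characteristic $\ne p$, whose every nontrivial irreducible $kS$-module has dimension exceeding $C_1$; hence $S$ acts trivially on $H^{j}(N,W_1)$ and
\[
\dim H^{i}\bigl(S,\,H^{j}(N,W_1)\otimes W_2\bigr)=\dim H^{j}(N,W_1)\cdot \dim H^{i}(S,W_2),
\]
which the simple-group hypothesis bounds. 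This ``large simple groups have no small nontrivial modules'' trichotomy on the top quotient is the device your outline is missing. A secondary point: passing to central covers increases $|G|$, which breaks induction on $|G|$; the paper's induction on $n+r$ is insensitive to $p'$-covers.
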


One can also raise a similar question about $\mathrm{Ext}$.   There should be a reduction
to the case of simple groups.   We give an example showing that
$\dim \mathrm{Ext}^1_G(V,W)$ can be arbitrarily large even for $V, W$
absolutely irreducible faithful modules.   See \cite{GKKL} for a similar 
example for $H^2$.   

We also improve our $H^1$ results from \cite{GT} giving
bounds in terms of the sectional rank but also depending upon
the prime.   Here are some of the results in this direction:

\begin{thm} \label{GRresult}   Let $G$ be a finite simple group of Lie type in
characteristic $\ell$.  Let $W$ be the Weyl group of $G$.
 Let $p \ne \ell$ be a prime and $k$ an algebraically
closed field of characteristic $p$. Then the following statements hold.
\begin{enumerate}[\rm(i)]
\item The number of irreducible $kG$-modules $V$ with $H^1(G,V) \ne 0$
is at most $| \Irr(W)| + 3$.  
\item Suppose that $p \nmid [G_i:B]$ for any
minimal parabolic subgroup $G_i$ containing a fixed Borel subgroup $B$ of $G$. Then 
the number of irreducible $kG$-modules $V$ with $H^1(G,V) \ne 0$
is less than $| \Irr(W)|$.
\end{enumerate}
\end{thm}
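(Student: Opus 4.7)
The plan is to refine the proof of the $|W|+e$ bound in \cite{GT} by two sharpenings: (a) counting \emph{distinct} irreducible summands of $J/J^2$ rather than composition factors with multiplicity, and (b) exploiting the Howlett--Lehrer style parameterization of the principal series by $\Irr(W)$.

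As in \cite{GT}, we use that $H^1(G,V)\ne 0$ iff the simple module $V$ appears in the semisimple head $J/J^2$, where $J$ is the radical of the projective cover $P(k)$ of the trivial module. The first step is to show that every such $V$ is, up to a bounded exceptional set, a constituent of the parabolically induced module $\Ind_B^G(k_B)$. Since $p\ne\ell$, the unipotent radical $U$ of $B$ has order coprime to $p$, so the inflation--restriction sequence collapses and $H^1(B,-)$ factors through $B/U\cong T$; combined with Shapiro's lemma, this forces $V$ (outside the exceptional set) to appear in the principal series.

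The second step invokes the cross-characteristic parameterization of the composition factors of $\Ind_B^G(k_B)$ by $\Irr(W)$ due to Dipper and Geck--Rouquier: even when the Hecke algebra $\End_{kG}(\Ind_B^G(k_B))$ is not semisimple, its simple modules are indexed by a subset of $\Irr(W)$, yielding at most $|\Irr(W)|$ distinct principal-series constituents. The additional ``$+3$'' absorbs a bounded number of exceptional simples arising when $p$ divides some $[G_i:B]$: namely, possible cuspidal contributions, a Steinberg-type term, and a residual defect from the failure of the Hecke algebra to be semisimple. The detailed bookkeeping to arrive precisely at $3$ will use block-theoretic input about the principal block in cross-characteristic.

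For part (ii), the hypothesis $p\nmid[G_i:B]$ for every minimal parabolic $G_i$ implies that the index parameters of the Hecke algebra $\End_{kG}(\Ind_B^G(k_B))$ are all invertible in $k$, hence the algebra is semisimple and isomorphic to $kW$. Consequently, $\Ind_B^G(k_B)$ decomposes multiplicity-freely into exactly $|\Irr(W)|$ distinct simples, and the exceptional sources in part (i) vanish. Among these $|\Irr(W)|$ simples, the trivial module $k$ itself appears (as the Hecke-trivial constituent), yet $H^1(G,k)=0$ since $G$ is perfect; so at most $|\Irr(W)|-1<|\Irr(W)|$ simples have nonzero first cohomology.

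The main obstacle is the first step: one must carefully verify that a simple $V$ with $H^1(G,V)\ne 0$ either is a principal-series constituent or belongs to a small, explicitly controlled exceptional set. This requires tracking how $\Ext^1$ from the trivial module propagates along the Harish-Chandra filtration and using block theory to exclude contributions from non-principal Harish-Chandra series, where the Deligne--Lusztig description of the principal block in cross-characteristic is the key input.
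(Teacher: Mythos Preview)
Your overall strategy for (i) --- bound the principal-series simples by $|\Irr(W)|$ via Geck--Rouquier, then add a bounded exceptional set --- matches the paper. But the paper's handling of the exceptional set is much more direct than what you propose: it simply invokes \cite[Theorem 1.3(ii)]{GT}, which already shows that there are at most four irreducible $kG$-modules $V$ with $V^B=0$ and $H^1(G,V)\ne 0$. Combined with $H^1(G,k)=0$ (one of the $\leq|\Irr(W)|$ modules with $V^B\ne 0$ is $k$), this gives $|\Irr(W)|-1+4=|\Irr(W)|+3$. No block theory, Deligne--Lusztig input, or Harish-Chandra filtration is needed; the ``bookkeeping to arrive precisely at $3$'' is a one-line citation.

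For (ii) your argument has a genuine gap. The hypothesis $p\nmid[G_i:B]$ for all $i$ does \emph{not} force the Hecke algebra $\End_{kG}(\Ind_B^G k)$ to be semisimple: semisimplicity is governed by the full Poincar\'e polynomial (equivalently $[G:B]$), not just the rank-one indices. For instance, in type $A_2$ one has $[G_i:B]=q+1$ but $[G:B]=(q+1)(q^2+q+1)$, and primes dividing $q^2+q+1$ satisfy your hypothesis while the Hecke algebra is non-semisimple. Moreover, $\Ind_B^G k$ is essentially never multiplicity-free: even in characteristic zero the simple summands occur with multiplicity $\dim\chi$ for $\chi\in\Irr(W)$. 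And even if the Hecke algebra were semisimple, that says nothing about simples $V$ with $V^B=0$; you still need a separate argument to rule those out. The paper's route is entirely different: under $p\nmid[G_i:B]$, one shows directly (via \cite[Theorem 6.1]{GT}, restated here as part of Corollary~\ref{prime to index}) that $V^B=0$ implies $H^1(G,V)=0$, so every $V$ with nonzero $H^1$ already has $V^B\ne 0$, and the Geck--Rouquier bound together with $H^1(G,k)=0$ gives $\leq|\Irr(W)|-1$.
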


\begin{thm} \label{GT1}  Let $G$ be a finite simple group of Lie type  in
characteristic $\ell$ of twisted rank $e$.   Let $W$ be the Weyl group of $G$.
 Let $p \ne \ell$ be a prime and $k$ an algebraically
closed field of characteristic $p$.   Let $V$ be an irreducible  
$kG$-module.   Let $G_1, \ldots, G_e$ denote the minimal
parabolic subgroups properly containing a fixed Borel subgroup $B$ of $G$.  
\begin{enumerate}[\rm(i)]
\item If $p \nmid |G_i|$ for $1 \leq i \leq e$, then
$\dim H^1(G,V) \le \dim V^B < |W|^{1/2}$. 
\item If $p \nmid |B|$, then $\dim H^1(G,V) < |W|^{1/2}$.
\item In general, $\dim H^1(G,V) < e-1+|W|^{1/2}$. If $V^B = 0$, then $\dim H^1(G,V) \leq 1$.
\item Moreover, if $V_1, \ldots,V_m$ are pairwise non-isomorphic representatives of isomorphism 
classes of irreducible $kG$-modules with $V^B \neq 0$ and $V \not\cong k$, then
$$\sum^m_{i=1}(\dim H^1(G,V_i)+1 -e/2)^2 \leq me^2/4 + |W|+e+1.$$ 
\end{enumerate}
\end{thm}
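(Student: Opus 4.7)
My plan is to exploit the $BN$-pair structure of $G$, with the induced module $M := \Ind_B^G k$ and its endomorphism algebra (a modular Hecke algebra of dimension at most $|W|$) playing the central role, building on the framework developed in \cite{GT}. The first step is the Hecke-algebra inequality
$$\sum_V (\dim V^B)^2 \le |W|,$$
where the sum runs over isomorphism classes of irreducible $kG$-modules $V$. This is immediate from Frobenius reciprocity, which identifies $\dim V^B$ with $\dim \hom_{kG}(V, M)$, combined with a dimension count on $\End_{kG}(M)$, whose dimension is at most $|W|$ (with equality when $p \nmid |B|$). Extracting the contribution of the trivial module, any non-trivial irreducible $V$ with $V^B \neq 0$ satisfies $\dim V^B < |W|^{1/2}$; this is the source of the square-root bounds throughout the theorem.

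The second step, doing the real work, is to relate $\dim H^1(G,V)$ to $\dim V^B$ using the generation of $G$ by the minimal parabolics $G_1, \ldots, G_e$. A cocycle $\alpha \in Z^1(G,V)$ is determined by its restrictions $\alpha|_{G_i}$, with compatibility governed by values on $B$. For (i), the hypothesis $p \nmid |G_i|$ forces $H^1(G_i, V) = 0$, so each $\alpha|_{G_i}$ is cohomologous to zero, and matching the chosen coboundary elements across $B$ yields an injection of $H^1(G,V)$ into a subquotient of $V^B$. For (ii), the weaker hypothesis $p \nmid |B|$ makes $M$ a projective $kG$-module, and the long exact sequence from $0 \to N \to M \to k \to 0$ (with $N$ the augmentation kernel) identifies $H^1(G,V)$ with a subquotient of $\hom_{kG}(N, V)$; combining with Step 1 gives $\dim H^1(G,V) < |W|^{1/2}$. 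For (iii), each of the (at most $e$) parabolics with $p \mid |G_i|$ contributes at most one extra dimension, so $\dim H^1(G,V) \le \dim V^B + e - 1 < |W|^{1/2} + e - 1$, and the $V^B = 0$ case forces the obstruction to gluing to reduce to a single scalar.

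For part (iv), set $H_i := \dim H^1(G,V_i)$ and $D_i := \dim V_i^B$. The inequality $H_i + 1 - e/2 \le D_i + e/2$ coming from (iii) gives, upon squaring and summing,
$$\sum_{i=1}^m (H_i + 1 - e/2)^2 \le \sum_{i=1}^m D_i^2 + m e^2/4 + (\text{controlled cross terms}),$$
and an attentive accounting of the cross terms against $\sum_i D_i^2 \le |W| - 1$ (which excludes the trivial module) produces the claimed bound $m e^2/4 + |W| + e + 1$.

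The main obstacle will be the tightness of the matching argument in Step 2, especially for (iii): one must show that every $p$-divisible parabolic $G_i$ contributes at most one independent cohomology class beyond what $V^B$ controls, and carefully balance this against the gluing constraints across $B$. Driving this bookkeeping down to exactly $e - 1$, rather than the cruder $|W| + e$ obtained in \cite{GT}, is the technical heart of the improvement.
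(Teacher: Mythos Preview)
Your approach to (i) is essentially that of the paper (the paper packages the minimal parabolics into two commuting families $H_1,H_2$ with $H_1\cap H_2=B$ and applies a two-generator derivation lemma, but the content is the same). Your sketch for (ii) via projectivity of $M$ is plausible but not the paper's route; the paper obtains (ii) as a degenerate case of the machinery built for (iii)--(iv), where $p\nmid|B|$ forces $H^1(B,k)=0$ and $\kappa:=\dim H^2(G,k)=0$.

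The real gap is in (iii). The assertion that each parabolic $G_i$ with $p\mid|G_i|$ ``contributes at most one extra dimension,'' giving $\dim H^1(G,V)\le\dim V^B+e-1$, is not justified and in fact cannot be: the paper explicitly records examples with $V^B=0$ and $\dim H^1(G,V)=1$, so no bound of the form $\dim H^1(G,V)\le\dim V^B+(\text{anything derivation-local})$ can hold without further input. The derivation-gluing argument you envision requires injectivity of the restriction maps $H^1(G_i,V)\to H^1(B,V)$, which is exactly what fails when $p\mid[G_i:B]$. The $V^B=0$ clause of (iii) is a separate citation to \cite{GT}.

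The paper's proof of (iii)--(iv) takes a completely different route. One analyses the indecomposable summands of $M=k_B^G$: a result of Lusztig (realness of Hecke-algebra characters) forces every such summand to have isomorphic, self-dual simple socle and head, so $V\cong V^*$ whenever $V^B\ne0$. For each such $V$ with $f=\dim V^B$, $h=\dim H^1(G,V)$, and $a$ the dimension of the image of $\res^G_B$ in $H^1(B,V)$, one shows $a\le e/f$ and constructs inside the trivial-socle summand $J$ a submodule $N$ with $N/k\cong\bigoplus_i V_i^{\oplus(h_i-a_i)}$. Bounding $\dim H^1(G,N)$ from below (via $H^2(G,k)$) and above (via $\dim H^0(Q,J/N)$ and $\dim H^1(G,J)$), using $\dim M^Q=|W|$, yields the key global inequality
\[
\sum_i(h_i+f_i)(h_i+f_i-a_i)\le|W|+e+\kappa-1,
\]
from which both the individual bound $h_i<e-1+|W|^{1/2}$ and the sum in (iv) (via $(h_i+f_i)(h_i+f_i-a_i)\ge(h_i+1)(h_i+1-e)$) drop out. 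Your squaring argument for (iv) both relies on the unproved inequality $h_i\le f_i+e-1$ and leaves the cross term $e\sum_i f_i$ uncontrolled; the paper's inequality is genuinely global and does not factor through any pointwise bound.
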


Theorem \ref{GT1}(iii) shows that the sum of squares of $\dim H^1(G,V)$, adjusted suitably, with $V$ running over 
all isomorphism classes of irreducible $kG$-modules, is also 
bounded roughly by $|W|$, and $m \leq |\Irr(W)|-1$ by Theorem \ref{GRresult}. See Section 4  for details and other related results.

\section{Sectional Rank and $H^1$}

Fix a prime $p$ and $k$ an algebraically closed field of characteristic $p$. 
If $G$ is a finite group, let $s(G)=s_p(G)$ be the sectional $p$-rank of $G$.   
If $V$ is a $kX$-module for a group $X$, we let $\bfC_X(V)$ be the kernel of the representation
sending $X$ to $\GL(V)$ and let $V^X$ denote the submodule of $X$-fixed points in $V$.

A key result is the following easy consequence of the main result of \cite{GT}.
Again, we conjecture that the constants can be chosen independently of $p$.

\begin{thm} \label{GT} Let $S$ be a finite nonabelian simple group with $s(S)=s$
fixed.   Then
there exist constants $A(p,s)$ and $B(p,s)$ such that:
\begin{enumerate}[\rm(i)] 
\item  if $V$ is an irreducible $kS$-module, then 
$\dim H^1(S,V) \le B(p,s)$; and 
\item  there are at most $A(p,s)$ irreducible $kS$-modules $V$
with $H^1(S,V) \ne 0$.
\end{enumerate}
\end{thm}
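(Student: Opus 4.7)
The plan is to apply the classification of finite simple groups and reduce to the case of finite simple groups of Lie type in cross characteristic of bounded twisted rank, where Theorems \ref{GRresult} and \ref{GT1} (which package the main results of \cite{GT}) give the required bounds directly.

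First I would argue that for fixed $p$ and $s$, only finitely many simple groups $S$ satisfy $s_p(S) \le s$ outside of the family of Lie type groups in characteristic different from $p$. For alternating $S = A_n$, one has $s_p(A_n) \ge \lfloor n/p \rfloor$, so the constraint leaves finitely many. Sporadic groups form a finite list. If $S$ is a simple group of Lie type in its defining characteristic $p$, realised over $\F_q$ with $q = p^a$, then a Sylow $p$-subgroup is the unipotent radical $U$ of a Borel, and $U/\Phi(U)$ is an elementary abelian $p$-group of rank at least $a$ times the Lie rank; so $s_p(S) \le s$ bounds both $a$ and the Lie rank, yielding finitely many such $S$. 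For each of these finitely many exceptional simple groups, (i) and (ii) hold trivially because $kS$ has only finitely many irreducible modules, contributing to the constants $A(p,s)$ and $B(p,s)$ by a straight maximum.

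The remaining case is $S$ of Lie type in some characteristic $\ell \ne p$. I would use the standard description of elementary abelian $p$-subgroups via maximal tori: if $d$ denotes the multiplicative order of $q$ modulo $p$ (together with its twisted-rank analogue), then $S$ contains an elementary abelian $p$-subgroup of rank at least $\lfloor e/d \rfloor$ coming from the $d$-torus inside a maximal torus of order divisible by $\Phi_d(q)$, where $e$ is the twisted rank of $S$. A routine case check across classical and exceptional families produces a bound on $e$, and hence on $|W|$, depending only on $p$ and $s$. With $e$ and $|W|$ so bounded, Theorem \ref{GRresult}(i) yields (ii) with $A(p,s) = |\Irr(W)| + 3$ (maximised over the finitely many admissible Weyl group types), and Theorem \ref{GT1}(iii) yields (i) with $B(p,s) = e - 1 + |W|^{1/2}$.

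The main obstacle is the quantitative cross-characteristic rank bound: one must verify in each family that a large twisted Lie rank really does force a large sectional $p$-rank, treating separately the congruence classes of $q$ modulo $p$ and the contribution of field and graph automorphism sections in the twisted types. Once this is in hand, the theorem is immediate from the quoted bounds of \cite{GT}; the conjectured independence of $p$ alluded to after Theorem \ref{main} would correspond to removing the dependence of $d$ on $p$ in this rank estimate, which is exactly the simple-group obstruction mentioned in the introduction.
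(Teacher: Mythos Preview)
Your proposal is correct and takes essentially the same approach as the paper: reduce via the classification to finitely many exceptions plus Lie type groups in cross characteristic of bounded twisted rank, then invoke the bounds from \cite{GT}. The paper's proof is terser---it simply asserts the reduction and cites the bound $\dim J/J^2 \le |W|+e$ from \cite{GT} rather than the refined Theorems~\ref{GRresult} and~\ref{GT1}, and it does not spell out the rank-bounding argument that you sketch via $d$-tori.
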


\begin{proof}   Excluding only finitely many simple groups (depending upon $s$ and $p$), 
we see that it reduces to the case that $S$ is a finite simple group of Lie type
in characteristic other than $p$.   The result now follows by \cite{GT} (see also
Section \ref{sec:GT} below for better results) where it was
shown that $\dim J/J^2 \le |W| + e$, where $J$ is the radical of the projective cover
of $k$, $G$ is a finite simple group of Lie type of twisted rank $e$ with Weyl group
$W$ and $p$ is not the characteristic of $G$.
\end{proof}

We first give a quick proof of Corollary \ref{maincor}(ii).

\begin{cor}  \label{h1bound}  There exists 
a constant $C(p,s)$ such that
$$\dim H^1(G,V) \le C(p,s)$$
for any finite group $G$ with $s_p(G)=s$ and any irreducible $kG$-module $V$.
\end{cor}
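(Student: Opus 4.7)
The plan is to deduce the bound from the simple-group result of Theorem \ref{GT} by two applications of the Lyndon-Hochschild-Serre five-term exact sequence, together with Clifford theory and the K\"unneth formula; no induction on $|G|$ is required.

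First I would reduce to the case that $V$ is faithful. Setting $K = \bfC_G(V)$ and using $V^K = V$, inflation-restriction gives
$$\dim H^1(G,V) \le \dim H^1(G/K, V) + \dim \hom_{G/K}(K, V).$$
Homomorphisms from $K$ into the $k$-vector space $V$ factor through the largest elementary abelian $p$-quotient $\bar K$ of $K$, whose $\F_p$-rank is at most $s$; the standard multiplicity bound against the simple $k[G/K]$-module $V$ then gives $\dim \hom_{G/K}(\bar K \otimes_{\F_p} k, V) \le s$. Thus one is reduced to the faithful case at the cost of an additive $s$ in the final constant.

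Assuming $V$ is faithful, $O_p(G)$ acts trivially on $V$ in characteristic $p$ and is hence trivial. Clifford's theorem then forces $V^{O_{p'}(G)}$ to be either all of $V$ (so $O_{p'}(G) = 1$ by faithfulness) or zero (so $H^1(G, V) = 0$ by coprime vanishing of $H^*(O_{p'}(G), V)$ combined with inflation-restriction). So $F(G) = 1$, and $F^*(G) = E(G) = S_1 \times \cdots \times S_m$ is a direct product of nonabelian simple groups. Simple factors of order coprime to $p$ contribute nothing via K\"unneth, so one may assume $p \mid |S_i|$ for all $i$; each then contributes at least one to the sectional $p$-rank, giving $m \le s$. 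Applying inflation-restriction to $E(G) \lhd G$, with $V^{E(G)} = 0$ by the same Clifford argument, yields $\dim H^1(G, V) \le \dim H^1(E(G), V)^{G/E(G)}$.

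To finish, I would let $W$ be an irreducible $E(G)$-constituent of $V|_{E(G)}$ with stabilizer $G_W$ and multiplicity $e$; since all such constituents form a single $G$-orbit, taking $G$-invariants collapses the orbit sum and yields
$$\dim H^1(E(G), V)^{G/E(G)} = \dim \bigl( e \cdot H^1(E(G), W) \bigr)^{G_W/E(G)}.$$
K\"unneth and the perfectness of the nonabelian simple factors force every cohomologically nontrivial $W$ to have the form $k \otimes \cdots \otimes U_j \otimes \cdots \otimes k$ with $H^1(S_j, U_j) \ne 0$, in which case $\dim H^1(E(G), W) = \dim H^1(S_j, U_j) \le B(p, s)$ by Theorem \ref{GT}(i). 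The Clifford-Dade decomposition of the isotypic component $\tilde W = Y \otimes W'$ as projective $G_W$-representations (with compensating cocycles and $\dim Y = e$) realizes $e \cdot H^1(E(G), W) \cong Y \otimes H^1(E(G), W)$ as a $G_W/E(G)$-module, whose invariants equal $\hom_{G_W/E(G)}(Y^*, H^1(E(G), W))$. Projective Schur over the algebraically closed field $k$ then bounds this by $\dim H^1(E(G), W) / \dim Y \le B(p, s)$. Combining the two reductions, $C(p, s) = B(p, s) + s$ will work. The main obstacle I expect is the last projective-Schur bookkeeping: one must verify the Clifford-Dade decomposition in the modular setting so that the cocycle on $Y$ really compensates that on $W'$, and invoke projective Schur, which is valid because irreducible projective modules over algebraically closed $k$ have endomorphism ring $k$. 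Everything else in the argument uses only standard tools.
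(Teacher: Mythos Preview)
Your argument is correct and yields the same constant shape $C(p,s)=B(p,s)+s$ that the paper obtains. The first reduction, to faithful $V$ via the five-term sequence and the bound $\dim\hom_G(\bar K\otimes_{\F_p}k,V)\le s$, is exactly what the paper does (the paper records the sharper $s/\dim V$, but either suffices). The divergence is in the second step: the paper simply invokes \cite{Gu2} to obtain $\dim H^1(G,V)\le\dim H^1(S,W)$ for some subnormal simple $S$ and irreducible $S$-submodule $W$ of $V$, and then applies Theorem~\ref{GT}. You instead reprove this reduction by hand: passing to $E(G)$ via inflation--restriction, using Clifford theory and K\"unneth to isolate a single simple factor $S_j$ and irreducible $kS_j$-module $U_j$, and then invoking the Clifford--Dade tensor decomposition together with Schur's lemma for the twisted group algebra to control the multiplicity module $Y$. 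This is essentially the content of the cited result of \cite{Gu2}, and your bookkeeping is sound (in particular, the projective-Schur step is valid: $Y^*$ is a simple module for the finite-dimensional $k$-algebra $k^{\alpha}[G_W/E(G)]$ with endomorphism ring $k$, so $\dim\hom(Y^*,U)\le\dim U/\dim Y^*$). Two small remarks: the claim that one ``may assume $p\mid|S_i|$ for all $i$'' is actually automatic once $O_{p'}(G)=1$, since the normal closure in $G$ of any $p'$-factor $S_i$ would be a normal $p'$-subgroup; K\"unneth is not the reason. And the bound $m\le s$ is never used downstream. What your route buys is self-containment; what the paper's route buys is brevity. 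The paper does carry out a very similar $F^*(G)$/K\"unneth/Clifford analysis later (in the proof of Lemma~\ref{lemma2}) for the finiteness statement, so your approach is in the same spirit as the rest of the section.
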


\begin{proof}  Let $Q:=\bfC_G(V)$.  By the inflation-restriction sequence in cohomology,
$$\dim H^1(G,V) \le \dim H^0(G/Q,H^1(Q,V)) + \dim H^0(G/Q, V).
$$
Since $Q$ acts trivially on $V$,   
$H^0(G/Q,H^1(Q,V)) \cong \hom_G(Q,V)
= \hom_G(Q/Q_1,V)$ where $Q/Q_1$ is the largest elementary abelian $p$-group
quotient of $Q$.  Since $|Q/Q_1| \le p^s$, we can view $Q/Q_1$ as an $\F_pG$-module of dimension
$\leq s$. As $V$ is irreducible, it follows that 
$\dim \hom_G(Q/Q_1,V) \le s/(\dim V)$.  So it suffices to assume that
$Q=1$ and $V$ is faithful.

In that case, it follows from \cite{Gu2} that $\dim H^1(G,V) \le \dim H^1(S,W)$
where $S$ is a subnormal simple subgroup of $G$ and $W$ is an $S$-submodule
of $V$ that is irreducible.   Now apply Theorem \ref{GT}.
\end{proof} 

We now turn towards the proof of Corollary \ref{maincor}(i).   We essentially
split the problem into two cases.   The first is when the module occurs
as a split chief factor in the group and the second is when
$H^1(G/\bfC_G(V), V) \ne 0$.  
Recall that a {\it split chief $p$-factor} of a finite group $G$ is a chief factor $H/K$ with
$K \lhd H$ and $H/K$ a $p$-group such that $H/K$ has a complement
in $G/K$.   

\begin{lemma} \label{lemma1}  Let $G$ be a finite group of sectional $p$-rank $s$.
In any minimal normal series of $G$,  there is a bound $D(s)$ 
on the number of split chief factors of $G$
that are $p$-groups. 
\end{lemma}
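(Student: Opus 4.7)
The plan is to prove the stronger quantitative statement that $D(s)=s$ works: the sum of the $\F_p$-ranks of all split chief $p$-factors in any chief series of $G$ is at most the sectional $p$-rank $s=s_p(G)$. The strategy is to package all the complements together into a single elementary abelian $p$-quotient of $G$ and then invoke the definition of sectional rank.

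First I would fix a chief series $1=G_0\lhd G_1\lhd\cdots\lhd G_n=G$ and index its split chief $p$-factors in increasing order as $H_j/K_j=G_{i_j}/G_{i_j-1}$, $j=1,\ldots,m$, with $\F_p$-ranks $r_j$; for each $j$ I fix a complement $L_j/K_j$ of $H_j/K_j$ in $G/K_j$, so that $G/L_j\cong H_j/K_j$ is elementary abelian of rank $r_j$. The observation to record up front is the nesting $L_j\supseteq K_j=G_{i_j-1}\supseteq G_{i_{j'}}=H_{j'}$ whenever $j'<j$; this immediately gives $L_jL_{j'}\supseteq H_{j'}L_{j'}=G$.

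The main step is to show, by induction on $m$, that if $L:=L_1\cap\cdots\cap L_m$ then $|G/L|=p^{r_1+\cdots+r_m}$. For the induction I would set $L':=L_2\cap\cdots\cap L_m$, note $L'\supseteq G_{i_1}$ (by the nesting above), and apply the inductive hypothesis inside $\bar G=G/G_{i_1}$: there the induced chief series has exactly $m-1$ split chief $p$-factors (the $G_{i_j}/G_{i_j-1}$ for $j\geq 2$, with their complements $L_j/G_{i_1}$), and the hypothesis yields $|G/L'|=p^{r_2+\cdots+r_m}$. Combining this with $L'L_1=G$ (from $L'\supseteq G_{i_1}$ and $G_{i_1}L_1=G$) will give $|G/L|=|G/L_1|\cdot|G/L'|=p^{r_1+\cdots+r_m}$.

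At that point the rest is painless: $G/L$ embeds into the elementary abelian $p$-group $\prod_j G/L_j$ and has the right order for the embedding to be surjective, so $G/L$ itself is elementary abelian of rank $\sum r_j$. As an elementary abelian $p$-quotient of $G$ its rank is at most $s$, so $m\leq\sum r_j\leq s$, and one takes $D(s):=s$. The main obstacle I expect is the inductive identity for $|G/L|$: it rests on the strengthened nesting $L_j\supseteq H_{j'}$ for $j'<j$ (rather than merely $L_j\supseteq K_j$), and on checking that passage to $\bar G=G/G_{i_1}$ genuinely preserves the setup so the induction can be applied (in particular, that no new split chief $p$-factor is created and that the $L_j/G_{i_1}$ really are the complements in $\bar G$).
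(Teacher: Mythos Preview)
Your argument has a genuine gap at the very first step: you fix a complement $L_j/K_j$ to the chief factor $H_j/K_j$ in $G/K_j$ and then write ``$G/L_j\cong H_j/K_j$'', treating $L_j$ as a normal subgroup of $G$. But a complement to an abelian chief factor is almost never normal. Indeed, $L_j/K_j$ is normal in $G/K_j$ if and only if $H_j/K_j$ is central in $G/K_j$, which for an irreducible $\F_pG$-module forces $r_j=1$. Everything downstream (the product formula for $|G/L|$, the embedding of $G/L$ into $\prod_j G/L_j$, the conclusion that $G/L$ is an elementary abelian quotient) rests on this normality, so the argument collapses.

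In fact the quantitative conclusion you aim for, $\sum_j r_j\le s$, is false. Take $G=S_4$ and $p=2$. The chief series $1\lhd V_4\lhd A_4\lhd S_4$ has two split chief $2$-factors: $V_4$ (rank $2$, complement $S_3$) and $S_4/A_4$ (rank $1$, complement $\langle(12)\rangle$), so $\sum r_j=3$. But a Sylow $2$-subgroup of $S_4$ is $D_8$, whose sectional $2$-rank is $2$, so $s=2<3$. Note also that the complement $S_3$ to $V_4$ is visibly not normal in $S_4$, illustrating the gap above. Thus there is no hope of repairing the argument to reach $\sum r_j\le s$; any valid proof must give a weaker bound.

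The paper's proof proceeds entirely differently: it reduces (via a minimal counterexample, passing to $G/\Phi(G)$, killing $\bfO_{p'}(G)$, and inducting past $E(G)$) to the case where $F^*(G)$ is an elementary abelian $p$-group of rank at most $s$, so that $G$ embeds in $F^*(G)\rtimes\GL_s(p)$ and hence has a Sylow $p$-subgroup of order at most $p^{s(s+1)/2}$. This bounds the \emph{total} number of $p$-chief factors (split or not) by $s(s+1)/2$, giving $D(s)$ of that order rather than $D(s)=s$.
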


\begin{proof}  Consider a minimal counterexample.
  We may assume that the Frattini subgroup $\Phi(G)=1$, whence $\Phi(F^*(G))=1$.
We may also assume that $\bfO_{p'}(G)=1$. 

By induction, we may also assume that $E(G)=1$.   Thus,  
$F^*(G)$ is an elementary abelian $p$-group and is a semisimple
$G$-module.   Thus,   $|G| \le p^s |\GL_s(p)|$.  Thus, it suffices to
consider the problem for completely reducible subgroups of $\GL_s(p)$.
We just make the trivial observation that since the Sylow $p$-subgroup
of $G$ has order at most $p^{s(s+1)/2}$, the result is now clear.
\end{proof} 

Note that in Lemma \ref{lemma1}, we do need to consider split chief factors; indeed,
in a cyclic group of order $p^a$, the sectional rank is $1$ but the
number of chief factors is $a$.   If one only wanted a bound on the
number of $p$-chief factors up to $G$-isomorphism, the proof
above can be modified to obtain this (and this is all we need). 
We reiterate that the bound
above does not depend on $p$.   One could prove a much stronger
staement using results in \cite{GMP}.   

It is convenient to introduce $s_p'(G)$ which we define to be the maximal
sectional $p$-rank of a section $H/K$ of $G$ that is a direct product
of non-abelian simple groups.     

\begin{lemma} \label{lemma2}  Let $G$ be a finite group with $s':=s_p'(G)$.   
There exist constants $C_i(p,s')$ such that:
\begin{enumerate}[\rm(i)] 
\item  The   number of irreducible $kG$-modules $V$
such that $H^1(G/\bfC_G(V), V)  \ne 0$  is at most $C_1(p,s')$.
\item  $\dim H^1(G/\bfC_G(V),V) \le C_2(p,s')$.
\end{enumerate}
\end{lemma}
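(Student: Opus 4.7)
The plan is to combine the faithful-case reduction of \cite{Gu2} with the simple-group input of Theorem \ref{GT}. For each irreducible $kG$-module $V$ appearing in either count, I first pass to $\bar G := G/\bfC_G(V)$, so that $V$ is a faithful irreducible $k\bar G$-module with the same $H^1$. This is harmless because $s_p'(\bar G) \le s_p'(G) = s'$. Faithfulness also forces $\bfO_p(\bar G) = 1$: otherwise the space of $\bfO_p(\bar G)$-fixed vectors in $V$ would be a nonzero proper $\bar G$-submodule, contradicting irreducibility.

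The key structural input is the main result of \cite{Gu2}: for any faithful irreducible $k\bar G$-module $V$ there exist a subnormal quasisimple subgroup $S \le \bar G$ and an irreducible $kS$-constituent $W$ of $V|_S$ with
$$\dim H^1(\bar G, V) \le \dim H^1(S, W).$$
If $H^1(\bar G, V) \ne 0$, then $H^1(S, W) \ne 0$, which by Maschke's theorem forces $p \mid |S|$, and then $p \mid |S/Z(S)|$ after unwinding the central part. Consequently $S/Z(S)$ is a non-abelian simple section of $G$ with $s_p(S/Z(S)) \ge 1$; and since it appears as one of the simple factors of a direct-product section of $G$, we also have $s_p(S/Z(S)) \le s'$.

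Part (ii) then follows at once: applying Theorem \ref{GT}(i) to $S$ via inflation from $S/Z(S)$ gives $\dim H^1(S, W) \le B(p, s')$, so we may take $C_2(p, s') := B(p, s')$.

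For part (i) I count admissible $V$'s through the pairs $(S, W)$. The $\bar G$-conjugacy classes of components $S \le \bar G$ with $p \mid |S/Z(S)|$ are at most $s'$ in number, since pairwise non-conjugate such components generate a central product inside $E(\bar G)$ whose image modulo centers is a direct product of non-abelian simples, whence a section of $G$ of sectional $p$-rank equal to the sum of the individual $s_p(S/Z(S)) \ge 1$. Theorem \ref{GT}(ii) then bounds by $A(p, s')$ the number of allowable $W$'s per $S$. The principal obstacle is the last step: bounding, uniformly in $p$ and $s'$, the number of irreducible $k\bar G$-modules $V$ that realize a prescribed pair $(S, W)$ through the reduction of \cite{Gu2}. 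I expect to handle this by a Clifford-theoretic argument, descending to the inertia subgroup of the $W$-isotypic component of $V|_{E(\bar G)}$ and exploiting that, because $\bfO_p(\bar G) = 1$ and the number of components is bounded by $s'$, the quotient $\bar G/E(\bar G)\bfO_{p'}(\bar G)$ acts faithfully on a structure of bounded complexity, so that only boundedly many extensions of $W$ can inflate to a module carrying a nonzero cohomology class.
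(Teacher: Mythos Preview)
Your treatment of (ii) is correct; it is the argument the paper records separately as Corollary~\ref{h1bound} (reduce to $V$ faithful, then quote \cite{Gu2} and Theorem~\ref{GT}). Within the proof of Lemma~\ref{lemma2} itself the paper instead obtains (ii) as a byproduct of the Clifford-theoretic analysis below, but either route is fine.

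For (i) the gap you flag is genuine, and your closing sketch does not discharge it. The paper supplies the missing idea, and it is short enough that you should carry out the Clifford reduction explicitly rather than invoke \cite{Gu2} as a black box. After passing to $\bar G$ with $\bfO_p(\bar G)=\bfO_{p'}(\bar G)=1$ (so $F^*(\bar G)=E(\bar G)$), the K\"unneth formula shows $H^1$ vanishes unless exactly one simple factor $S$ acts nontrivially on each irreducible constituent of $V|_{F^*(\bar G)}$; faithfulness then forces $\bar G$ to permute the components transitively, and Shapiro's lemma reduces to $F^*(\bar G)=S$ simple with $S$ acting homogeneously on $V$. Passing to a $p'$-central cover one writes $V\cong W\otimes U$ with $W$ irreducible over $S$ and $U$ irreducible over $\bar G/S$, and inflation--restriction gives
\[
H^1(\bar G,V)\;\cong\;H^0\bigl(\bar G/S,\ H^1(S,W)\otimes U\bigr),
\]
which is nonzero precisely when $U$ occurs as an irreducible quotient of $H^1(S,W)$ as a $\bar G/S$-module. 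Since $\dim H^1(S,W)\le B(p,s')$ by Theorem~\ref{GT}, there are at most $B(p,s')$ choices of $U$ for each $W$, and at most $A(p,s')$ choices of $W$. This is the concrete constraint your final paragraph was reaching for: nonvanishing of $H^1$ forces $U$ to live inside the small module $H^1(S,W)$, not merely to satisfy an unspecified bounded-complexity condition.
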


\begin{proof}   
Certainly, $s'_p(G/\bfC_G(V)) \leq s'$, so by induction on $|G|$ we may assume that $V$ is faithful.
Since $\bfO_p(G)$ acts trivially on any irreducible $kG$-module, 
we may assume that $\bfO_p(G) =1$.  If $\bfO_{p'}(G)$ acts nontrivially on $V$, 
then by the restriction-inflation sequence, we see that $H^1(G/\bfC_G(V),V)=0$.
Thus, we may also assume that $\bfO_{p'}(G)=1$. 
%This implies that $\bfO_p(C) = \bfO_{p'}(C) = 1$ for $C := \bfC_G(V)$, and so 
%$F^*(C)$ is a direct product of non-abelian simple groups. 

So $F^*(G) = S_1 \times \ldots \times S_t$ where the $S_i$'s are non-abelian simple
groups (and clearly $t \le s'$). 
%By the Schreier conjecture, $A := \prod^t_{i=1}\Out(S_i)$ is solvable.  
%As $\bfC_G(F^*(G)) = \bfZ(F^*(G)) = 1$, we see that $G/F^*(G)$ embeds in $A \wr \Sym_t$ and so  
%$s_p'(G/F^*(G)) < t \leq s'$. Hence, if $F^*(G)$ acts trivially on an irreducible $kG$-module
%$V$, then the result follows by induction.
By the above, $F^*(G)$ acts nontrivially 
on $V$, and so $H^0(F^*(G),V) = 0$. Decompose $V|_{F^*(G)} = c\oplus^d_{i=1}W_i$,
where the $W_i$ are $G$-conjugate, pairwise non-isomorphic irreducible $kF^*(G)$-modules and $c \geq 1$. Also write 
$W_i=W_{i,1} \otimes \ldots \otimes W_{i,t}$, where $W_{i,j}$ is a simple $S_j$-module.
If at least two of the $W_{1,j}$'s are nontrivial, then 
by the K\"unneth formula and the inflation-restriction sequence, $H^1(F^*(G),W_i) = 0$ and so 
$H^1(G/\bfC_G(V), V)=0$. Thus we may assume that $W_{1,1} \not\cong k$ but $W_{1,j} \cong k$ for
all $j > 1$. Now $G$ permutes the $S_j$'s. Assume this action is intransitive, say $S_t$ is not $G$-conjugate to $S_1$. 
Then the described shape of $W_1$ implies that $W_{i,t} \cong k$ for all $i$ and so $S_t \leq \bfC_G(V)$,
contrary to our assumption.  Hence, we may assume that $F^*(G)$ is the unique minimal normal subgroup of $G$
and all the $S_i$'s are $G$-conjugate.

%In particular, we are now considering faithful modules $V$.  
The argument above shows that $H^1(G,V) \ne 0$ implies that 
$V=\Ind_{\bfN_G(S_1)}^G(W)$ for some irreducible $kN$-module $W$ with 
$N:=\bfN_G(S_1)$ (in fact, $W|_{F^*(G)} \cong cW_1$).   
Note that modding out by $\bfC_G(S_1) \geq S_2 \times \ldots S_t$ does not change the
computation for $H^1$.   Thus,  it suffices to consider the case that $F^*(G)=S$
is a simple group (with bounded $s_p'(S)$).   

Using Shapiro's Lemma once more, we may assume that $S$ acts homogeneously on $V$
and so (passing to a central $p'$-cover if necessary), we may assume that
$V = W \otimes U$ where $U$ is a $G/S$ module.  

Applying inflation-restriction sequence again, we see that
$$
H^1(G,V)= H^0(G/S, H^1(S,V)).
$$

By taking a $G$-resolution and restricting to $S$, we see
that $H^j(N,V) \cong H^j(S,W) \otimes U$ as a $G/S$-module. 
By Theorem \ref{GT}, this gives the bound on $\dim H^1$
and also shows there is a bound on the number of possible modules
$W$ so that $H^1(S,W) \ne 0$ (and so also $H^1(N,V) \ne 0$).
Thus, there are only finitely many simple modules of $N$ that we need to
consider and so we may fix this. 

Now $H^0(N/S, H^1(S,W) \otimes U))$ is nonzero if and only if $U$
is a quotient (as an $N/S$-module) of $H^1(S,W)$ and so there are
only finitely many possibilities for $U$, whence the result. 
\end{proof}

We can now prove Corollary \ref{maincor} (which is equivalent to 
Theorem \ref{main}).   

We have already shown in Corollary \ref{h1bound} that there is a bound on $\dim H^1(G,V)$.   
   
Next we show there is a bound on the number of irreducible 
$kG$-modules $V$ with $H^1(G,V) \ne 0$.   By Lemma \ref{lemma1}
there are only 
finitely many such modules which occur as split chief factors
of $G$ and so we may assume that $V$ is not a chief factor of $G$.
Thus, by \cite[2.10]{AG}, we have that $H^1(G,V)=H^1(G/\bfC_G(V), V)$
and then Lemma \ref{lemma2} applies.

\section{Higher Cohomology}

We fix a prime $p$ and an algebraically closed field $k$ of characteristic $p$. 

We first note the trivial result:

\begin{lemma} \label{trivial} Let $H$ be a finite group and $V=W_1 \otimes W_2$
a tensor product of $kH$-modules with $W_2$ irreducible.  Then 
$\dim H^0(H, W_1 \otimes W_2) \le (\dim W_1)/(\dim W_2)$.
\end{lemma}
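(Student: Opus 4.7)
The plan is to recognize $H^0(H, W_1 \otimes W_2)$ as a Hom space and then bound the latter via Schur's lemma.

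First, I would use the standard $kH$-module isomorphism $W_1 \ot_k W_2 \cong \hom_k(W_2^*, W_1)$ (valid because $W_2$ is finite-dimensional), and take $H$-invariants to obtain
$$H^0(H, W_1 \ot W_2) \cong \hom_{kH}(W_2^*, W_1).$$
Since $W_2$ is irreducible, so is $W_2^*$; and as $k$ is algebraically closed, $\End_{kH}(W_2^*) = k$.

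Next, I would argue that the evaluation map
$$\hom_{kH}(W_2^*, W_1) \ot_k W_2^* \longrightarrow W_1, \quad f \ot v \mapsto f(v),$$
is $H$-equivariant (with $H$ acting trivially on the first factor) and injective. For injectivity, the domain is an isotypic sum of copies of $W_2^*$, so any nonzero $H$-submodule of the kernel would, by Schur's lemma applied coordinate-wise to a chosen basis $f_1, \ldots, f_n$ of $\hom_{kH}(W_2^*, W_1)$, produce scalars $\lambda_1,\ldots,\lambda_n \in k$, not all zero, with $\sum_i \lambda_i f_i = 0$; this contradicts linear independence.

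Comparing dimensions across this injection yields
$$\dim \hom_{kH}(W_2^*, W_1) \cdot \dim W_2 \le \dim W_1,$$
which together with the first isomorphism proves the lemma. There is no genuine obstacle here: the statement is essentially the familiar fact that the multiplicity of an irreducible module in a module is at most the ratio of dimensions, dressed up via the standard tensor-Hom adjunction.
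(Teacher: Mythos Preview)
Your argument is correct. The paper gives no proof of this lemma, merely labeling it a ``trivial result,'' so there is nothing to compare against; your tensor--Hom identification $H^0(H,W_1\otimes W_2)\cong \hom_{kH}(W_2^*,W_1)$ followed by the injectivity of the evaluation map is exactly the standard justification one would supply.
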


We need the following result that follows from an easy spectral sequence
argument.  See \cite{Ho} or \cite[Lemma 3.7]{GKKL}.  

\begin{lemma} \label{holt}   Let $G$ be a finite group, $N$ a normal
subgroup of $G$ and $V$ a $kG$-module.  Then 
$\dim H^n(G,V) \le \sum_{i=0}^n  \dim H^i(G/N, H^{n-i}(N,V))$.
\end{lemma}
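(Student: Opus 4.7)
The plan is to deduce this directly from the Lyndon--Hochschild--Serre spectral sequence associated to the normal subgroup $N \lhd G$ acting on the $kG$-module $V$. This is a first-quadrant cohomological spectral sequence with $E_2$-page
\[
E_2^{i,j} = H^i(G/N,\, H^j(N,V))
\]
converging to $H^{i+j}(G,V)$. The abutment supplies a decreasing filtration $F^{\bullet}$ on $H^n(G,V)$ whose successive quotients are $F^i H^n(G,V)/F^{i+1}H^n(G,V) \cong E_\infty^{i,n-i}$ for $0 \leq i \leq n$.

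Because dimensions are additive across short exact sequences of finite-dimensional $k$-vector spaces, the filtration gives the exact identity
\[
\dim H^n(G,V) = \sum_{i=0}^{n} \dim E_\infty^{i,n-i}.
\]
The key observation is then that each $E_\infty^{i,n-i}$ is a subquotient of $E_2^{i,n-i}$, obtained by successively taking kernels modulo images of the higher differentials $d_r$ for $r \geq 2$. Consequently,
\[
\dim E_\infty^{i,n-i} \leq \dim E_2^{i,n-i} = \dim H^i(G/N,\, H^{n-i}(N,V)),
\]
and summing over $i = 0, 1, \ldots, n$ produces exactly the claimed bound.

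I do not expect any real obstacle here: the statement is essentially the standard five-term-type estimate extracted from the LHS spectral sequence, and the role of the lemma in later sections appears to be purely as a bookkeeping device, which is why the authors are content to cite \cite{Ho} or \cite[Lemma 3.7]{GKKL}. If one wished to bypass spectral sequences, an alternative is an induction on $n$ via the short exact sequence of cochain complexes $C^\bullet(G/N,\, C^\bullet(N,V))$ computing $H^\bullet(G,V)$, where the same additivity of dimensions across the resulting long exact sequence delivers the inequality; either route avoids any delicate estimation.
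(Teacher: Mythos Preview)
Your proof is correct and matches the paper's approach exactly: the paper does not give a proof but merely remarks that the result ``follows from an easy spectral sequence argument'' and cites \cite{Ho} and \cite[Lemma~3.7]{GKKL}. Your write-up via the Lyndon--Hochschild--Serre spectral sequence and the subquotient inequality $\dim E_\infty^{i,n-i} \le \dim E_2^{i,n-i}$ is precisely that argument made explicit.
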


%We now prove Theorem \ref{red}.   
\begin{proof}[Proof of Theorem \ref{red}]
We induct on $n+ r$.
   If $r=0$ or $n=0$, then the result is clear.

Let $D_j$ be the maximum value for $\dim H^j(S,V)$ with
$S$ a non-abelian simple group with $|S|_p \le p^r$.
This exists by assumption.   

Let $V$ be an irreducible $kG$-module.   If $V \cong k$,
then the cohomology ring $H^*(G,k)$ embeds in 
$H^*(P,k)$ for $P$ a Sylow $p$-group and the result holds.
So assume that $V$ is nontrivial. 

By Shapiro's Lemma, we may assume that $V$ is a primitive $kG$-module
(otherwise $V=\Ind_H^G(W)$ for some proper subgroup $H$). 

Let $N$ be a maximal (proper) normal subgroup of $G$.  
Set $S=G/N$.   Then
$N$ acts homogeneously on $V$ by primitivity.  Passing to a $p'$-central
cover of $G$ if necessary, $V \cong W_1 \otimes W_2$
where $W_1$ is a $kG$-module that is $N$-irreducible and
$W_2$ is an irreducible $G/N$-module.  By Lemma \ref{holt}, 
$$
\dim H^n(G,V) \le \sum_{i=0}^n  \dim H^i(G/N, H^{n-i}(N,V)).
$$

As we observed earlier, we see
that $H^j(N,V) \cong H^j(N,W_1) \otimes W_2$ as a $G$-module.

If $p$ does not divide $|S|$, then we see by irreducibility of $W_2$ that
$$
\dim H^n(G,V) \le \dim H^0(G/N, H^n(N,W_1) \otimes W_2)
\le \dim H^n(N,W_1).
$$
Thus we may assume that $G/N$ has no nontrivial $p'$-quotients; in particular, $|N|_p < |G|_p$.
 
If $G/N$ has order $p$,  then $W_1 = V$ and so
$$\dim H^n(G,V) \le \sum_{i=0}^n   \dim H^i(N,V),$$
and this is at most $\sum_{j=0}^n C(p,r-1,j)$ and the result holds.  

More generally, if $G/N$ has order at most $e$, 
we can pass to $G_0$ where $G_0/N$ is a Sylow $p$-subgroup
of $G/N$.  Then the restriction map on cohomology from 
$G$ to $G_0$ is injective.   Note that $V$ restricted to $G_0$
has at most $\dim W_2 \leq e^{1/2}$ composition factors (all isomorphic to
$W_1$ as $N$-modules).   Using the previous case and induction,
we get a bound for $\dim H^n(G,V)$.   

The remaining case is when 
$S \cong G/N$ is a nonabelian simple group of sufficiently large order $e$.
As $|S|_p \leq p^r$ is bounded, we may choose $e$ sufficiently large so that 
$S$ is a simple group of Lie type in characteristic $\neq p$ and any nontrivial $S$-module of 
dimension less than the maximum dimension of the
possible dimensions  of $H^j(N,W_1)$ is trivial. 
Then
$$
\dim H^n(G,V) \le C_1\cdot \sum_{j=0}^n  \dim H^j(S, W_2),
$$
where $C_1$ is an upper bound for $\dim H^j(N,W_1)$
and the result follows. 
\end{proof}

\section{Cross Characteristic $H^1$} \label{sec:GT}
 
 In this section, we take $G$ to be a finite simple group of Lie type
 of twisted rank $e$ over the field of size $q$.  Fix a Borel subgroup $B$
 of $G$ with unipotent radical $Q$.    Let $G_1, \ldots, G_e$ denote
 the minimal parabolic subgroups properly containing $B$. 
    Let $p$ be a prime not dividing
 $q$ and $k$ an algebraically closed field of characteristic $p$.
 
 Our goal is improve the bounds from \cite{GT} on $\dim H^1(G,V)$ 
 with $V$ an irreducible $kG$-module.   
  
 We first prove Theorem \ref{GRresult} that improves the bound for the number of irreducible $kG$-modules with nontrivial
 $H^1$.  This critically depends on results of Geck and Rouquier (see \cite{GP})
 as well as results from \cite{GT}.  The original bound from \cite{GT} was of the magnitude  
 of $|W|$. 
 
 \begin{thm}  The number of irreducible $kG$-modules with nontrivial
 $H^1$ is at most $| \Irr(W)| + 3$.   If $p \nmid [G_i:B]$ for $1 \leq i \leq e$, 
 then this number is less than $| \Irr(W)|$.
 \end{thm}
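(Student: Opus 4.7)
The plan is to partition the irreducible $kG$-modules $V$ with $H^1(G,V) \ne 0$ according to whether $V^B = 0$, and bound each class separately; the improvement over the $|W|+e$-type bound of \cite{GT} comes from replacing a crude count of generators of $J/J^2$ by the Geck--Rouquier control on the Iwahori--Hecke algebra.

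First I would handle the principal-series case $V^B \ne 0$. Such $V$ is a composition factor of $\Ind_B^G k$, whose $kG$-endomorphism algebra is, by Curtis--Iwahori--Matsumoto, the Iwahori--Hecke algebra $H_k(W,q)$; the distinct irreducible constituents of $\Ind_B^G k$ are in bijection with the simple $H_k(W,q)$-modules. The Geck--Rouquier theorem (see \cite{GP}) provides a unitriangular decomposition matrix for $H_k(W,q)$, giving a canonical injection of its simple modules into $\Irr(W)$; in particular the number of principal-series constituents is at most $|\Irr(W)|$. Since $G$ is simple and hence perfect, $H^1(G,k) = 0$, so the trivial module does not contribute, leaving at most $|\Irr(W)| - 1$ principal-series modules with nonzero $H^1$.

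For the remaining case $V^B = 0$, Theorem \ref{GT1}(iii) yields $\dim H^1(G,V) \le 1$, but one still needs to bound the number of such $V$. Here I would use the BN-pair analysis of \cite{GT}: applying the inflation--restriction sequence to the unipotent radical $Q \lhd B$, together with the constraints supplied by the minimal parabolic subgroups $G_i$, the conditions $V^B = 0$ and $H^1(G,V) \ne 0$ force $V$ into a very restricted non-principal Harish-Chandra series. A careful accounting yields at most $4$ such modules, which combined with the principal-series estimate gives part (i): $|\Irr(W)| - 1 + 4 = |\Irr(W)| + 3$. For part (ii), the hypothesis $p \nmid [G_i:B]$ for every $i$, combined with a sharpening of the arguments in \cite{GT}, eliminates the non-principal Harish-Chandra contribution entirely, so the count is governed by the principal series alone and one obtains $|\Irr(W)| - 1 < |\Irr(W)|$.

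The hard part will be the cuspidal-type counting in part (i): extracting the sharp constant $3$ from the BN-pair structure of $G$, since the bound $\dim H^1(G,V) \le 1$ alone gives no control on the number of such $V$. This requires the detailed identification of which non-principal Harish-Chandra series can contribute to the first radical layer of the projective cover of $k$, and is exactly where the interplay between \cite{GT} and the minimal-parabolic structure becomes essential.
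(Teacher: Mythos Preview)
Your proposal is correct and follows essentially the same route as the paper: split by whether $V^B \ne 0$, use Geck--Rouquier to bound the principal-series contribution by $|\Irr(W)|$, subtract one for the trivial module, and add the at-most-$4$ modules with $V^B=0$ (none under the hypothesis of (ii)). One clarification: you describe extracting the constant $4$ as the ``hard part'' requiring a detailed Harish-Chandra accounting, but in fact the paper simply quotes it as a black box from \cite[Theorem~1.3(ii)]{GT}; likewise the vanishing in case (ii) is Corollary~\ref{prime to index} (resting on \cite[Theorem~6.1]{GT}), so no new cuspidal analysis is needed here.
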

 
 \begin{proof} It follows from results of Geck and Rouquier (see \cite[7.5.6, 8.2.5]{GP})
 that the number of distinct simple $kG$-modules $V$ with $V^B \ne 0$
 is at most  $|\Irr(W)|$.  By \cite[Theorem 1.3(ii)]{GT} and Corollary \ref{prime to index} (below), there are
 at most $4$ irreducible $kG$-modules with $V^B = 0 \ne H^1(G,V)$ and there are none
 if $p \nmid [G_i:B|$ for all $i$. Also note that $H^1(G,k)=0$ as $G$ is perfect. Hence both statements follow.
 \end{proof}
 
 Next we derive upper bounds on $\dim H^1(G,V)$.  Note that if $S$ is a simple $kG$-module, then, by Frobenius reciprocity, the 
 multiplicity of $S$ in the socle of $M = \Ind^G_B(k) = k_B^G$ is $\dim S^B = \dim S^Q$.
 In particular, this gives that 
 $$\sum_S (\dim S^B)^2  \le |W| = \dim \End_G(k_B^G),$$
 where the sum is over all (isomorphism classes of) simple $kG$-modules.  
 
First we record an elementary result.

\begin{lemma} \label{alperin-gor}  Let $G$ be a finite group.  Assume
that $G$ is generated by subgroups $H_1$ and $H_2$ and set $A:=H_1 \cap H_2$.
Let $V$ be a $kG$-module and assume that 
$$H^1(H_1,V)=H^1(H_2,V)=0.$$
Then $\dim H^1(G,V) \le \dim V^A$.
\end{lemma}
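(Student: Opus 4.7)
The plan is to prove the lemma by a Mayer–Vietoris style cocycle calculation that produces an explicit injection from $H^1(G,V)$ into a subquotient of $V^A$.

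First I would represent any cohomology class in $H^1(G,V)$ by a derivation (1-cocycle) $d\colon G\go V$. Restricting $d$ to $H_i$ gives a derivation $d|_{H_i}$, and since $H^1(H_i,V)=0$ this restriction is inner: there exist $v_i \in V$ with $d(h) = hv_i - v_i$ for every $h \in H_i$. For $h \in A = H_1\cap H_2$ the two formulas must agree, so $h(v_1-v_2)=v_1-v_2$ for all $h\in A$, i.e.\ $v_1-v_2 \in V^A$. Note that each $v_i$ is determined by $d$ only up to an element of $V^{H_i}$, and $V^{H_1}+V^{H_2}\subseteq V^A$ since $A\leq H_i$. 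Thus the assignment $d \mapsto v_1-v_2$ gives a well-defined linear map
$$\Phi \colon H^1(G,V) \longrightarrow V^A/(V^{H_1}+V^{H_2}),$$
where one also checks it vanishes on principal derivations (one may then take $v_1=v_2=v$).

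Next I would show $\Phi$ is injective. Suppose $\Phi([d])=0$, so $v_1-v_2 = w_1 - w_2$ with $w_i \in V^{H_i}$. Setting $v := v_1 - w_1 = v_2 - w_2$ and using $w_i \in V^{H_i}$, one computes that $d(h) = hv - v$ for all $h \in H_1 \cup H_2$. The key observation is then that two derivations of $G$ that agree on a subset $S$ which generates $G$ as a group (and which is closed under inverses, as $H_1\cup H_2$ obviously is) must agree on all of $G$: this follows by induction on word length from the cocycle identity $d(gh)=g\cdot d(h) + d(g)$. Hence $d$ is the inner derivation associated to $v$, so $[d]=0$ in $H^1(G,V)$.

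The injection $\Phi$ yields
$$\dim H^1(G,V) \le \dim V^A - \dim(V^{H_1}+V^{H_2}) \le \dim V^A,$$
as claimed. There is no serious obstacle here; the only delicate point to get right is the well-definedness of $\Phi$ (because each $v_i$ is only determined modulo $V^{H_i}$) and the verification that agreement of two derivations on the symmetric generating set $H_1 \cup H_2$ forces agreement on $G$. Both are short bookkeeping arguments.
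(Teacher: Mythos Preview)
Your proof is correct and follows essentially the same approach as the paper: both restrict a derivation $\delta$ to the $H_i$, use $H^1(H_i,V)=0$ to write $\delta|_{H_i}=\delta_{v_i}$ inner, and observe that $v_1-v_2\in V^A$ because the restrictions agree on $A$. The paper packages this as a dimension count on the image of $\Der(G,V)\to\Der(H_1,V)\times\Der(H_2,V)$, whereas you build an explicit injection $\Phi\colon H^1(G,V)\hookrightarrow V^A/(V^{H_1}+V^{H_2})$, which is a slightly cleaner formulation and even yields the marginally sharper bound $\dim H^1(G,V)\le \dim V^A-\dim(V^{H_1}+V^{H_2})$.
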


\begin{proof}  Let $D:=\Der(G,V)$ and consider the restriction map
$$\pi: D \to \Der(H_1,V) \times \Der(H_2, V).$$  
Since the $H_i$ generate $G$, $\pi$ is injective. For $\delta \in D$, let $\delta_i$ be the image
of $\delta$ in $\Der(H_i,V)$ with $i = 1,2$.   By assumption,  $\delta_i$ is the inner derivation $\delta(v_i)$ 
corresponding to some $v_i \in V$.  Since 
$\delta_1 - \delta_2$ vanishes on $A$, we see that $\dim \pi(D) \leq \dim V + \dim V^A$, 
whence the result.
\end{proof}

This has the following corollary.

\begin{cor} \label{prime to minimal}  Assume that $p \nmid |G_i|$
for all $i$.   If $V$ is any $kG$-module, then $\dim H^1(G,V) \le \dim V^B$.
If $V$ is irreducible, then $\dim H^1(G,V) < |W|^{1/2}$. 
\end{cor}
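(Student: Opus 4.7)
My plan splits the corollary into its two assertions.

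The inequality $\dim H^1(G,V) < |W|^{1/2}$ for irreducible $V$ will follow immediately from the first once we note the single-term bound $(\dim V^B)^2 \leq \sum_S(\dim S^B)^2 \leq |W|$ recorded just before the corollary; strictness is forced by $V \not\cong k$, which we may assume because $H^1(G,k)=0$ (as $G$ is perfect) and in any case the trivial module always contributes $1$ to the sum, giving $(\dim V^B)^2 + 1 \leq |W|$.

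For the main inequality $\dim H^1(G,V) \leq \dim V^B$, my plan is a derivation-theoretic argument in the spirit of Lemma \ref{alperin-gor}, but applied to the multi-subgroup presentation $G = \langle G_1,\ldots,G_e\rangle$ rather than a pair. The containment $B \subseteq G_i$ together with $p \nmid |G_i|$ forces $p \nmid |B|$, so $H^1(B,V) = 0$ and $H^1(G_i,V) = 0$ for every $V$. Given a 1-cocycle $\delta\colon G \to V$, subtracting an inner derivation arranges $\delta|_B = 0$; then each $\delta|_{G_i}$ is inner, $\delta|_{G_i} = \delta(v_i)$, with $v_i \in V^B$ determined modulo $V^{G_i}$. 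Since $G$ is generated by the $G_i$, the assignment $\delta \mapsto (v_1,\ldots,v_e)$ is injective.

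The main obstacle is bounding the image of this assignment by $\dim V^B$ alone: the naive count gives only $\sum_i(\dim V^B - \dim V^{G_i})$, which is generally too large (already visible for $e=3$ when $V^B$ is the sign representation of the Weyl group and each $V^{G_i}$ vanishes). The required tightening must exploit the braid relations among the $s_i$'s in $G$, which via the identity $\delta(s_i) = (s_i-1)v_i$ translate into linear compatibility conditions on the $v_i$'s. I would execute this by induction on $e$ along the tower $P_{\{1\}} \subset P_{\{1,2\}} \subset \cdots \subset P_{\{1,\ldots,e\}} = G$ of standard parabolics, with the base case $e=2$ handled directly by Lemma \ref{alperin-gor} applied to $G_1, G_2$ (their intersection being $B$). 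At each inductive step one passes from $P_{\{1,\ldots,i-1\}}$ to $P_{\{1,\ldots,i\}}$ using a refined derivation argument whose key mechanism is the surjectivity of the restriction $D(P_{\{1,\ldots,i-1\}},V) \to D(B,V)$, guaranteed by $H^1(B,V)=0$; this surjectivity is what absorbs the new contribution into the single copy of $\dim V^B$ already paid at the base case, preventing the bound from growing with $e$. Making this telescoping bookkeeping precise is the delicate part I expect to work through in detail.
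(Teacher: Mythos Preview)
Your treatment of the second assertion is fine and matches the paper's: once $\dim H^1(G,V)\le\dim V^B$ is known, the bound $\sum_S(\dim S^B)^2\le|W|$ together with the contribution of the trivial module forces $\dim V^B<|W|^{1/2}$.

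For the first assertion, however, there is a genuine gap. Your induction up the chain of standard parabolics $P_{\{1\}}\subset P_{\{1,2\}}\subset\cdots$ requires, at each step, a two-subgroup lemma applied to $H_1=P_{\{1,\ldots,i-1\}}$ and $H_2=G_i$. But Lemma~\ref{alperin-gor} needs $H^1(H_1,V)=0$, and Lemma~\ref{weak ag} needs the restriction $H^1(H_1,V)\to H^1(B,V)$ to be injective; since $H^1(B,V)=0$, both hypotheses force $H^1(P_{\{1,\ldots,i-1\}},V)=0$, which you do not have (and which is generally false once $p\mid[P_{\{1,\ldots,i-1\}}:B]$). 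Without that, the derivation bookkeeping only yields $\dim H^1(P_{\{1,\ldots,i\}},V)\le \dim H^1(P_{\{1,\ldots,i-1\}},V)+\dim V^B-\dim V^{P_{\{1,\ldots,i-1\}}}$, so the bound accumulates to something of order $(e-1)\dim V^B$, exactly the blow-up you noticed. The appeal to braid relations to ``telescope'' this is left entirely unexecuted, and I do not see a mechanism by which the braid relations, translated into linear conditions on the $v_i$, collapse the count to a single copy of $\dim V^B$ in general.

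The paper avoids this difficulty by reducing to \emph{exactly two} subgroups, so that Lemma~\ref{alperin-gor} applies directly with no induction. The trick is to $2$-colour the set $\Delta$ of simple roots so that roots of the same colour are pairwise non-adjacent in the Dynkin diagram (always possible, since Dynkin diagrams are trees). For $j=1,2$ let $H_j$ be the parabolic generated by $B$ together with the root subgroups in colour class $\Delta_j$. Because the roots in $\Delta_j$ commute, the Levi of $H_j$ is a direct product of rank-one Levis, so $[H_j:B]=\prod_{i\in\Delta_j}[G_i:B]$; combined with $p\nmid|G_i|$ this gives $p\nmid|H_j|$, hence $H^1(H_j,V)=0$. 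Since $H_1\cap H_2=B$ and $\langle H_1,H_2\rangle=G$, Lemma~\ref{alperin-gor} yields $\dim H^1(G,V)\le\dim V^B$ in one stroke.
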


\begin{proof}  Split the set $\Delta$ of positive simple roots into two subsets $\Delta_j$, $j = 1,2$, so that the root subgroups 
in each subset commute (this is easy to do).  Let  $H_j$ be the subgroup of $G$ generated
by $B$ and the roots subgroups corresponding to $\Delta_j$ for $j = 1,2$.  
The construction of $\Delta_j$ and the assumption that $p \nmid |G_i|$ for all $i$ imply that
%Then $H_i$ is a parabolic subgroup and 
$p \nmid |H_j|$.   In particular, $H^1(H_j,V)=0$.
Clearly, $G=\langle H_1, H_2 \rangle$ as it contains all (positive simple) root subgroups, and $H_1 \cap H_2 = B$.   
Now the first statement follows by applying Lemma \ref{alperin-gor}.  The second statement also follows, since $\dim V^B < |W|^{1/2}$
by Frobenius reciprocity. 
\end{proof}

We generalize the previous results. 

\begin{lemma} \label{weak ag} Let $G$ be a finite group.  Assume
that $G$ is generated by subgroups $H_1$ and $H_2$ and set $B:=H_1 \cap H_2$.
Let $V$ be a $kG$-module.   Assume that the restriction maps from $H^1(H_i, V)$
to $H^1(B,V)$ are injective.  
Then $\dim H^1(G,V) \le \dim H^1(B,V) + \dim V^B$.
\end{lemma}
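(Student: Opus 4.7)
The plan is to generalize the derivation-counting argument of Lemma~\ref{alperin-gor}. Set $D=\Der(G,V)$. Since $G=\la H_1,H_2\ra$, the restriction map
\[
\pi:D\to \Der(H_1,V)\times\Der(H_2,V),\quad \delta\mapsto(\delta_1,\delta_2),\ \ \delta_i:=\delta|_{H_i},
\]
is injective. Under the weaker hypothesis, each $\delta_i$ need not be inner, so I would first peel off its class in $H^1(H_i,V)$, which by assumption is controlled by restriction to $B$.

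More precisely, consider the composite
\[
q:D\to H^1(H_1,V)\times H^1(H_2,V),\quad \delta\mapsto([\delta_1],[\delta_2]),
\]
and set $C=\ker(q)$. Because $\delta_1|_B=\delta|_B=\delta_2|_B$, the image of $q$ lies in the fibre product $\{(\a_1,\a_2) : r_1(\a_1)=r_2(\a_2)\}$, where $r_i:H^1(H_i,V)\to H^1(B,V)$ is the restriction map. The injectivity of $r_1$ and $r_2$ identifies this fibre product with $\mathrm{im}(r_1)\cap\mathrm{im}(r_2)\subseteq H^1(B,V)$, so $\dim\mathrm{im}(q)\le\dim H^1(B,V)$.

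To bound $\dim C$, fix $\delta\in C$ and write $\delta_i=\delta(v_i)$ for some $v_i\in V$, in the notation of Lemma~\ref{alperin-gor}. The compatibility on $B$ forces $\delta(v_1-v_2)|_B=0$, so $v_1-v_2\in V^B$. The linear map $(v_1,v_2)\mapsto(\delta(v_1),\delta(v_2))$ has kernel $V^{H_1}\oplus V^{H_2}$, which already lies inside the $(\dim V+\dim V^B)$-dimensional subspace $\{(v_1,v_2) : v_1-v_2\in V^B\}$. Since $\pi$ is injective, this yields
\[ \dim C\le \dim V+\dim V^B-\dim V^{H_1}-\dim V^{H_2}. \]

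Combining the two estimates with $\dim H^1(G,V)=\dim D-\dim V+\dim V^G$ gives
\[
\dim H^1(G,V)\le \dim H^1(B,V)+\dim V^B+\dim V^G-\dim V^{H_1}-\dim V^{H_2},
\]
and the containments $V^G\subseteq V^{H_i}\subseteq V^B$ render the correction $\dim V^G-\dim V^{H_1}-\dim V^{H_2}$ non-positive, yielding the stated bound. The only point that needs care is this final bookkeeping with fixed-point subspaces; conceptually, the key new ingredient over Lemma~\ref{alperin-gor} is the use of the injectivity of $r_1$ and $r_2$ to replace the vanishing of $H^1(H_i,V)$.
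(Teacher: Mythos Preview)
Your proof is correct. Both your argument and the paper's are derivation-counting arguments in the style of Lemma~\ref{alperin-gor}, so the approaches are essentially the same. The only organizational difference is that the paper works asymmetrically: it restricts to a single factor via $\pi_1:\Der(G,V)\to\Der(H_1,V)$, identifies $\Ker(\pi_1)$ with a subspace of $\Der_B(H_2,V)\cong V^B$ (using injectivity of $r_2$), and then bounds $\dim H^1(H_1,V)\le\dim H^1(B,V)$ (using injectivity of $r_1$). Your version is symmetric in $H_1,H_2$, passing to the product $H^1(H_1,V)\times H^1(H_2,V)$ and bounding the image by the fibre product. Both routes arrive at the same inequality with the same correction terms; the paper's is a line shorter but leaves the final bookkeeping implicit, whereas yours spells it out.
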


\begin{proof} This is very similar to the proof of Lemma \ref{alperin-gor}.  Let $\delta \in \Der(G,V)$
and let $\delta_i$ be the restriction of $\delta$ to $H_i$.  

Consider the restriction map $\pi_1$ from $\Der(G,V) \to \Der(H_1,V)$.   
Since $G$ is generated by $H_1$ and $H_2$, $\Ker(\pi_1)$ embeds
into $\Der_B(H_2,V)$, the space of the derivations on $H_2$ that are $0$ on $B$.
Now, if $\delta \in \Der_B(H_2,V)$, then $\delta$ is inner on $B$ and so also 
on $H_2$ (since the restriction map is injective on $H^1$).   Thus, 
$\Der_B(H_2,V)$ can be identified with $V^B$, and the result follows. 
\end{proof}

This gives:

\begin{cor} \label{prime to index}  Let $V$ be a $kG$-module.
 Assume that $p \nmid |G_i:B|$
for all $1 \leq i \leq e$.   If $V$ any $kG$-module, then the following statements hold.
\begin{enumerate}[\rm(i)] 
\item $\dim H^1(G,V) \le   \dim H^1(B,V) + \dim V^B$.
\item If $V$ is irreducible, then $\dim H^1(G,V) \le  (e +1)\dim V^B < (e+1)|W|^{1/2}$.
\item If $V$ is any $kG$-module,  $\dim H^1(G,V) \le  (e +1)\dim V^Q$.   
\item If $V$ is a submodule of $k_B^G$, then $\dim H^1(G,V) \le  (e +1)\dim V^B$.
%\item $V$ is irreducible, then  $\dim H^1(G,V) <  (e+1)|W|^{1/2}$. 
\end{enumerate}
\end{cor}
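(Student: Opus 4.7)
The plan is to prove (i) first via Lemma~\ref{weak ag}, and then reduce (ii)--(iv) to a bound on $\dim H^1(B,V)$ coming from the inflation--restriction sequence applied to $Q \lhd B$. For (i): following the proof of Corollary~\ref{prime to minimal}, I split the set of simple roots as $\Delta = \Delta_1 \sqcup \Delta_2$ with the roots in each $\Delta_j$ pairwise orthogonal (possible since the Dynkin diagram of the twisted root system is a disjoint union of trees, hence bipartite). Let $H_j := \langle B, X_{\pm\alpha} : \alpha \in \Delta_j \rangle$, so that $G = \langle H_1, H_2 \rangle$ and $H_1 \cap H_2 = B$. Because $\Delta_j$ is orthogonal, the Weyl group of $H_j$ splits as a product of the individual $\langle s_\alpha\rangle$, so the Bruhat-cell count gives $[H_j:B] = \prod_{\alpha_i \in \Delta_j}[G_i:B]$, which is coprime to $p$ by hypothesis. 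Hence the composition $\mathrm{cor} \circ \mathrm{res} : H^1(H_j,V) \to H^1(B,V) \to H^1(H_j,V)$ is multiplication by the invertible scalar $[H_j:B]$, so the restriction is injective and Lemma~\ref{weak ag} yields (i).

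For (ii)--(iv) the common input is a bound on $\dim H^1(B,V)$. Since $B = Q \rtimes T$ with $|Q|$ coprime to $p$, inflation--restriction collapses to $H^1(B,V) = H^1(T, V^Q)$. The torus $T$ is abelian and generated by the images of the $e$ coroot embeddings $\alpha_i^\vee$ corresponding to the simple roots of the twisted root system, so $d(T) \le e$. A derivation $T \to W$ is determined by its values on a minimal generating set of $T$, so $\dim H^1(T,W) \le d(T) \dim W \le e\dim W$ for any $kT$-module $W$. Taking $W = V^Q$ and combining with (i) gives (iii): $\dim H^1(G,V) \le e\dim V^Q + \dim V^B \le (e+1)\dim V^Q$.

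For (iv), the Bruhat decomposition $G = \bigsqcup_{w\in W} QwB$ exhibits each double coset as a single $Q$-orbit on $G/B$ under left translation, so $|Q\backslash G/B| = |W| = |B\backslash G/B|$. Hence $\dim (k_B^G)^Q = \dim (k_B^G)^B$, the inclusion $(k_B^G)^B \subseteq (k_B^G)^Q$ is an equality, and $T$ acts trivially on $(k_B^G)^Q$. For any submodule $V \subseteq k_B^G$ one then has $V^Q \subseteq (k_B^G)^B$, forcing $V^Q = V^B$, and (iii) gives (iv). For (ii) with $V$ irreducible and $V^B \neq 0$: Frobenius reciprocity $V^B = \hom_G(k_B^G, V)$ shows $V$ is a composition factor of $k_B^G$; since $kQ$ is semisimple, $(-)^Q$ is exact, so $V^Q$ is a subquotient of $(k_B^G)^Q = (k_B^G)^B$, again giving $V^Q = V^B$, and (iii) applies. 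If instead $V^B = 0$: using Lyndon--Hochschild--Serre for $T_p \lhd T$ with $T/T_p$ a $p'$-group, $H^1(T, V^Q) = H^1(T_p, (V^Q)^{T_{p'}})$, and $((V^Q)^{T_{p'}})^{T_p} = V^B = 0$ together with $T_p$ being a $p$-group in characteristic $p$ forces $(V^Q)^{T_{p'}} = 0$, so $H^1(B,V) = 0$ and (i) gives $H^1(G,V) = 0$. The strict bound $(e+1)|W|^{1/2}$ then follows from $\dim V^B < |W|^{1/2}$, which is strict because both $V$ and $k$ contribute to the sum $\sum_S(\dim S^B)^2 \le |W|$.

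The main obstacle is the structural input $d(T) \le e$ for the torus of the Borel: this requires identifying $T$ as a quotient of a product of $e$ cyclic subgroups $\F_{q^{d_i}}^\times$ coming from simple coroots of the twisted type. It is a standard consequence of the description of $F$-fixed maximally split tori in groups of Lie type, but it is the one step that cannot be extracted from the formal apparatus of Lemmas~\ref{alperin-gor} and~\ref{weak ag} and must be verified case by case (or via the general coroot description).
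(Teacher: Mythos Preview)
Your proof is correct. For (i) it is the paper's argument verbatim (same $H_1,H_2$ via the bipartition of the Dynkin diagram, then Lemma~\ref{weak ag}). For (ii)--(iv) you take a slightly different but equivalent route: the paper works with $R=\mathbf{O}_{p'}(B)$ rather than $Q$, invokes \cite[Theorem~6.1]{GT} for the case $V^B=0$ in (ii), cites \cite[Proposition~3.1]{GT} for $V^R=V^B$, and then deduces (iii) from (ii) by summing over a composition series (using additivity of $\dim V^Q$). You instead prove (iii) directly from $H^1(B,V)\cong H^1(T,V^Q)$ and the derivation bound, give a self-contained inflation--restriction argument through $T_{p'}\lhd T$ for the $V^B=0$ case, and obtain $V^Q=V^B$ from the Bruhat double-coset count rather than citing \cite{GT}. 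Your version is more self-contained; the paper's is shorter by outsourcing to \cite{GT}. Both rest on the same structural input you correctly flagged, namely that $T=B/Q$ (equivalently $B/R$) has rank at most the twisted rank~$e$.
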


\begin{proof}  
Let $H_1$ and $H_2$ be constructed as in the proof of Corollary 
\ref{prime to minimal}. Then the $H_j$, $j = 1,2$, are parabolic subgroups and $p \nmid [H_j:B]$.  Now apply
Lemma \ref{weak ag} to see that (i) holds. 

We next prove (ii) and so assume that $V$ is irreducible. 
If $V^B = 0$, then $H^1(G,V) = 0$ by \cite[Theorem 6.1]{GT}, and we are done.
So we may assume that $V^B \neq 0$.

Setting $R:=\bfO_{p'}(B)$, we have 
$V = [R,V] \oplus V^R$ and $H^1(B,[R,V])=0$.  Also, $V^R=V^B$ by \cite[Proposition 3.1]{GT}, and
so $H^1(B,V)=H^1(B,V^R) = H^1(B,V^B)$. As $B/R$ has rank $\leq e$ as an abelian group,
$\dim H^1(B,V^B) \leq e \dim V^B$ (and is in fact $0$ if $p \nmid |B|$).  
As noted previously,   $\dim V^B < |W|^{1/2}$, whence (ii) follows.   

Now (iii) follows from (ii) by the long exact sequence in cohomology  since $\dim V^Q$ is additive over composition factors.
Finally (iv) follows from (iii), since $\dim V^B = \dim V^Q$ for any submodule $V$ of $k_B^G$.
\end{proof}

So we have obtained an upper bound of the magnitude of $|W|^{1/2}$ unless
$p$ divides $[G_i:B]$ for some $G_i$.  There cannot be a result
in general that bounds $\dim H^1(G,V)$ in terms of $\dim V^B$
since there are (albeit very few) examples with $V^B=0$
and $\dim H^1(G,V)=1$, see \cite[\S6]{GT}.

We will give another bound in all cases, using the
property that 
\begin{equation}\label{h1-dual}
  \dim H^1(G,V)=\dim H^1(G,V^*)
\end{equation}    
for any irreducible $kG$-module $V$ {\it with $V^B \ne 0$} 
(in cross characteristic, and $G$ is a finite simple group of Lie type as before). 
For classical groups, any irreducible module $V$ is quasi-equivalent to its dual 
\cite[2.1, 2.4]{DGPS} and \cite{TZ} (i.e. $V^*$ is a twist of $V$ by an automorphism) whence
\eqref{h1-dual} holds (without the extra assumption that $V^B \ne 0$). 
%We need the following preliminary results.  
The equality \eqref{h1-dual} for exceptional groups of Lie type follows from 
the following results about the socle and the head of indecomposable
 summands of $M=k_B^G$.  
% As always,  we are assuming that the characteristic of $k$ does not divide $|Q|$.  
 
 \begin{lemma}\label{socle}  Let $M=k_B^G$.
 \begin{enumerate}[\rm(i)]
 \item  $M$ is a direct sum of indecomposable modules with simple socle and
 simple head which are isomorphic.
 \item If $Y$ is an indecomposable summand of $M$, then the isomorphism
 class of $Y$ is determined by its socle (or head).
 \item If $Y$ is an indecomposable summand of $M$, then its socle and head
 are self-dual $kG$-modules.
 \end{enumerate}
 \end{lemma}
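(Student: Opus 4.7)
The proof will rest on three structural inputs. First, the endomorphism ring $E := \End_{kG}(M) \cong H_k(W,q)$ is the Iwahori--Hecke algebra, which is a \emph{symmetric} $k$-algebra with symmetrizing trace $T_w \mapsto \delta_{w,1}$. Second, $M$ is self-dual as a $kG$-module, via the nondegenerate $G$-invariant symmetric bilinear form $\langle gB, hB \rangle = \delta_{gB,hB}$; equivalently $(k_B^G)^* \cong \Ind_B^G(k^*) = k_B^G$. Third, the cross-characteristic Harish--Chandra correspondence: the functor $L \mapsto L^B = \hom_{kG}(M,L)$ induces a bijection between isomorphism classes of simple $kG$-modules $L$ with $L^B \neq 0$ (principal series) and simple $E$-modules.

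\textbf{Proof of (i) and (ii).} Decompose $1 = \sum_{i=1}^t e_i$ in $E$ as a sum of primitive orthogonal idempotents and set $Y_i := M e_i$. Then $M = \bigoplus_i Y_i$, and each $Y_i$ is indecomposable since $\End_{kG}(Y_i) = e_i E e_i$ is local. For each simple $kG$-module $L$, a direct computation yields $\hom_{kG}(Y_i, L) \cong L^B \cdot e_i$, which is one-dimensional precisely when $L^B$ is the (unique) simple quotient of the projective indecomposable $E e_i$, and zero otherwise; hence $\mathrm{top}(Y_i) = L_i$ is a simple $kG$-module. Applying the same argument to $Y_i^*$, an indecomposable summand of $M^* \cong M$, we find $\mathrm{top}(Y_i^*)$ is simple, so $\soc(Y_i) \cong (\mathrm{top}(Y_i^*))^*$ is simple. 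To show $\soc(Y_i) \cong \mathrm{top}(Y_i)$, invoke the symmetry of $E$: the projective indecomposable $E e_i$ has simple socle isomorphic to its simple head, both equal to the simple $E$-module $L_i^B$; transferring via the functor $\hom_{kG}(M,-)$, which induces an equivalence between $\mathrm{add}(M)$ and $\mathrm{proj}(E)$ and matches simple submodules/quotients through the Harish--Chandra bijection, yields $\soc(Y_i) \cong L_i$. For (ii), the isomorphism class of $E e_i$ is determined by its simple head (basic fact for finite-dimensional algebras), so the isomorphism class of $Y_i$ is determined by $\mathrm{top}(Y_i) \cong \soc(Y_i) = L_i$.

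\textbf{Proof of (iii).} Since $M \cong M^*$, the duality $Y \mapsto Y^*$ permutes the indecomposable summands of $M$; write $Y_i^* \cong Y_{\tau(i)}$ for an involution $\tau$ on the indexing set. Combining $\soc(Y_i) \cong L_i$ from (i) with $\soc(Y_i) \cong (\mathrm{top}(Y_i^*))^* \cong L_{\tau(i)}^*$ gives $L_i \cong L_{\tau(i)}^*$, so self-duality of $L_i$ is equivalent to $\tau(i) = i$, i.e., to $Y_i \cong Y_i^*$. The self-duality of $M$ induces an anti-involution $\sigma$ of $E$, which in the Hecke algebra realization is the standard anti-involution $T_w \mapsto T_{w^{-1}}$, and $\tau$ corresponds (under Harish--Chandra) to the action of $\sigma$-twist on simple $E$-modules. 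Thus (iii) reduces to the statement that every simple $H_k(W,q)$-module is $\sigma$-self-dual. In characteristic $0$ this is immediate from the semisimplicity of $E$ together with the fact that all ordinary characters of $W$ are real-valued (every element of a Weyl group is conjugate to its inverse). In modular characteristic, one invokes the cellular Kazhdan--Lusztig structure of $H_k(W,q)$ (the cells being stable under $w \mapsto w^{-1}$), equivalently the Geck--Rouquier canonical basic set, to transfer $\sigma$-self-duality from simples of $kW$ to simples of $E$.

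\textbf{Main obstacle.} The principal technical difficulty is the final step of (iii): establishing $\sigma$-self-duality of simple $H_k(W,q)$-modules in modular characteristic. While the characteristic-$0$ case is elementary, the deformation to $H_k(W,q)$ over a modular field $k$ requires nontrivial Hecke-algebraic machinery (cellularity, canonical basic sets) to bridge from the group algebra $kW$ to $E$. Verifying that taking $B$-fixed points matches socle on the $kG$-side with socle on the $E$-side (i.e.\ that $(\soc Y_i)^B \neq 0$, so the Harish--Chandra bijection is applicable to the socle) is a secondary technical point that relies on the projectivity of $Ee_i$ and the progenerator property of $M$.
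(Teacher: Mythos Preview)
Your approach to (i) and (ii) is essentially the same as the paper's: both rest on the Hecke endomorphism algebra $E=\End_{kG}(M)$ being symmetric and on Green's Fitting correspondence between indecomposable summands of $M$ and projective indecomposable $E$-modules. The paper simply cites \cite[1.20, 1.25, 1.28]{CE} for this package; you have unpacked it. The point you flag as a ``secondary technical point'' --- that $(\soc Y_i)^B \neq 0$, so that $\soc(Y_i)$ lies in the domain of the Harish--Chandra bijection --- is exactly where the care is needed, and is what the CE references handle by analysing both functors $\hom_{kG}(M,-)$ and $\hom_{kG}(-,M)$ in tandem with the symmetry of $E$; your sketch does not quite close this loop on its own.

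For (iii) the two arguments genuinely diverge at the final step. Both reduce self-duality of $\soc(Y)$ to the statement that every projective indecomposable (equivalently, every simple) $E$-module is self-dual under the anti-involution of $E$ induced by $M\cong M^*$. The paper proves this by a characteristic-zero lift: over a DVR $\mathcal{O}$ with fraction field $K$, the Hecke algebra $\End_{KG}(\Ind_B^G K)$ is split semisimple over $\Q[q^{\pm 1/2}]$ by a theorem of Lusztig \cite[8.4.7, 9.3.9]{GP}, hence all of its characters are real-valued; this forces the Brauer characters of the projective indecomposables for $E$ to be real, so $X\cong X^*$ by linear independence of projective Brauer characters. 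You instead invoke the cellular (Kazhdan--Lusztig/Geck) structure of $H_k(W,q)$, obtaining $\sigma$-self-duality of simple $E$-modules directly from the cellular bilinear forms. Both routes are correct. The paper's lifting argument is short once Lusztig's splitting theorem is granted and yields real-valuedness of Brauer characters as a byproduct; your cellular argument stays entirely in characteristic $p$ but imports a deeper structural theorem on Iwahori--Hecke algebras.
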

 
 \begin{proof}  (i) and (ii)  follow by \cite[1.20, 1.25, 1.28]{CE}. 
 Next we prove (iii).   Let $Y$ be an indecomposable summand of $M$
 with simple socle $S$.   We claim that $Y \cong Y^*$.  If we prove this,
 then by (i) and (ii), $S \cong S^*$ and the result follows.  Note that
 since $M$ is self-dual,  $Y^*$ is a summand of $M$ as well. 
 
Let $E:=\End_G(M)$ and let $X$ be the projective indecomposable $E$-module given by the Morita
correspondence (i.e. $X=\hom_G (Y, M)$ and $X^*$ the corresponding
projective module).   Let $\mathcal{O}$ be a discrete valuation ring in characteristic
$0$ with residue field contained in $k$ and let $K$ be the quotient field
of $\mathcal{O}$.   Let $M' := \Ind^G_B(\mathcal(O))$ be the corresponding induced module over $\mathcal{O}$
and let $E' := \End_G(M')$ be the corresponding endomorphism ring.   Then there is a bijection
between the indecomposable projective summands of $E$ and $E'$. 

Then $L:=\End_{KG}(\Ind_B^G(K)) \cong K \otimes E'$ is a Hecke algebra
and by a result of Lusztig (see \cite[8.4.7, 9.3.9]{GP})
 is split semisimple over $\mathbb{Q}[x^{1/2},x^{-1/2}]$, whence its projective modules
have real characters.  The same is true for $E'$ and so also for the Brauer characters for 
the projective indecomposables for $E$.  Thus, $X \cong X^*$ and $Y \cong Y^*$,
whence the result follows. 
\end{proof}
 
Note that if $p \nmid |B|$, then $M$ is projective and the first two statements of Lemma \ref{socle}
 hold trivially (since they hold for any projective indecomposable $kG$-module).   The self-duality
 statement \ref{socle}(iii) critically  requires Lusztig's result.   
 
%Note that Lemma \ref{socle} immediately implies \eqref{h1-dual}.

\begin{cor}\label{mult1}
Let $V$ be an irreducible $kG$-module with $V^B \ne 0$.  
\begin{enumerate}[\rm(i)]
\item Then $V$ has multiplicity $\dim V^B$ in the socle of $k_B^G$, 
and $V \cong V^*$.
\item Let $V_2$ be an irreducible $kG$-module with $V_2^B \neq 0$ and $V_2 \not\cong V$. Suppose that $L$ is a $kG$-module 
with $\soc(L) = V_1 \cong V$ and $L/V_1 \cong V_2^{\oplus m}$ for some $m \geq 0$, and that 
$L \cong V_1 \oplus V_2^{\oplus m}$ as $B$-module. Let $X_i$ denote an indecomposable direct summand of $k^G_B$
with socle $V_i$ for $i = 1,2$. Then $L$ embeds in $X_1$.
\end{enumerate}
\end{cor}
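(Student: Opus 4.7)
For part (i), the plan is a direct application of Frobenius reciprocity combined with Lemma \ref{socle}. For any irreducible $kG$-module $V$, Frobenius reciprocity gives $\dim \hom_G(V, k_B^G) = \dim \hom_B(V, k) = \dim (V^*)^B$, and this equals the multiplicity of $V$ as a direct summand of $\soc(k_B^G)$. When $V^B \neq 0$, Lemma \ref{socle}(i) says $V$ is the socle of some indecomposable summand of $k_B^G$, and Lemma \ref{socle}(iii) then forces $V \cong V^*$. So the multiplicity is simply $\dim V^B$, proving (i).

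For part (ii), my plan is to compute $\dim \hom_G(L, k_B^G)$ in two ways and compare the results. One computation uses Frobenius reciprocity together with the given $B$-decomposition of $L$: combined with part (i), giving $V_i^* \cong V_i$, this produces
\[
\dim \hom_G(L, k_B^G) = \dim V_1^B + m \dim V_2^B.
\]
The other computation uses Lemma \ref{socle} to write $k_B^G$ as a direct sum of indecomposable modules with simple socles, where by part (i) the summand $X_i$ occurs with multiplicity $\dim V_i^B$. The key observation is that only summands isomorphic to $X_1$ or $X_2$ can receive a nonzero $G$-map from $L$: if $\phi: L \to Y$ is nonzero, then $\soc(Y) \subseteq \mathrm{im}(\phi)$, so $\soc(Y)$ is a composition factor of $L$ and must therefore be isomorphic to $V_1$ or $V_2$.

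To pin down $\dim \hom_G(L, X_2)$, observe that any $\phi: L \to X_2$ must kill $V_1 = \soc(L)$, since otherwise $\phi(V_1) \cong V_1$ would be a simple submodule of $X_2$ distinct from $\soc(X_2) = V_2$, contradicting simplicity of $\soc(X_2)$. Hence $\phi$ factors through $L/V_1 \cong V_2^{\oplus m}$, giving $\dim \hom_G(L, X_2) = m \cdot \dim \hom_G(V_2, X_2) = m$. Equating the two counts yields $\dim V_1^B + m \dim V_2^B = \dim V_1^B \cdot \dim \hom_G(L, X_1) + m \dim V_2^B$, which forces $\dim \hom_G(L, X_1) = 1$. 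Finally, any nonzero $\phi : L \to X_1$ is injective: if $\phi|_{V_1} = 0$, it would factor through $L/V_1 \cong V_2^{\oplus m}$, producing a nonzero submodule of $X_1$ all of whose composition factors are $V_2$, contradicting the fact that every nonzero submodule of $X_1$ contains $\soc(X_1) = V_1$; hence $\phi|_{V_1}$ is injective, and since $\soc(L) = V_1$ forces every nonzero submodule of $L$ to meet $V_1$, we conclude $\ker(\phi) = 0$. The main point requiring care is the bookkeeping in the two-way count, specifically verifying that no summand of $k_B^G$ other than $X_1$ and $X_2$ can contribute to the hom space.
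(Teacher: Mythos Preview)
Your proposal is correct and follows essentially the same approach as the paper's own proof. Both arguments prove (i) via Frobenius reciprocity plus Lemma~\ref{socle}(i),(iii), and prove (ii) by computing $\dim\hom_G(L,k_B^G)$ in two ways---once via Frobenius reciprocity and the $B$-splitting of $L$, once via the decomposition of $k_B^G$ from Lemma~\ref{socle}---and then showing the resulting nonzero map $L\to X_1$ is injective; your write-up is in fact slightly more explicit than the paper's at a couple of points (e.g.\ invoking $V_i\cong V_i^*$ to pass from $\dim\hom_B(V_i,k)$ to $\dim V_i^B$, and spelling out the injectivity argument).
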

 
\begin{proof}   
(i) The first statement follows by Frobenius reciprocity. Next, $V$ embeds in an indecomposable summand $Y$ of $k^G_B$.
By Lemma \ref{socle}(i) and (iii), we now have $V \cong \soc(Y)$ and $V \cong V^*$. 
%and the fact
%that $\dim V^B= \dim (V^*)^B$ (see the proof of \cite[Corollary 3.3]{GT}).  Of course,
%the result also follows from Lemma \ref{socle}. 
%or use that fact
%that $V \cong V^*$ for irreducible modules with $\dim V^B \ne 0$). 

\smallskip
(ii) Let $f_i := \dim V_i^B > 0$ for $i = 1,2$. Then  
$$\dim \hom_G(L,k^G_B) = \dim \hom_B(L,k) = f_1+mf_2,$$
whereas 
$$\dim \hom_G(L/V_1,k^G_B) = \dim \hom_B(L/V_1,k) = mf_2.$$
Now we apply Lemma \ref{socle} to decompose $k^G_B$ into a direct sum of its indecomposable direct summands. Let $Y$ be such 
a summand with $\soc(Y) = W$. Note that $\hom_G(L,Y) = 0$ if $W \not\cong V_1,V_2$. (Otherwise a nonzero quotient $L'$ of $L$ embeds in 
$Y$, and so either $V_1$ or $V_2$ embeds in $\soc(Y) = W$.) A similar argument shows that
$$\dim \hom_G(L,X_2) = \dim \hom_G(L/V_1,X_2) = m \cdot \dim \hom_G(V_2,X_2) = m.$$
As $X_i$ has multiplicity $f_i$ in $k^G_B$, it follows that 
$$\begin{aligned}
f_1 +mf_2 & = \dim \hom_G(L,k^G_B) = f_1 \cdot \dim \hom_G(L,X_1) + f_2 \cdot \dim \hom_G(L,X_2)\\
    & = f_1\cdot \dim \hom_G(L,X_1)+mf_2,
\end{aligned}$$    
and so $\dim \hom_G(L,X_1) = 1$. Also note that $\dim \hom_G(L/V_1,X_1) = 0$ as $\soc(X_1) \not\cong V_2$. Hence
$L$ embeds in $X_1$, as stated. 
\end{proof} 

Next we need to relate $\dim H^1(G,V)$ with the multiplicity of $V$  not in the socle of $M = k_B^G$
but in $\soc(M/k)$.   Note that if $p$ does not divide $B$, the projective cover $P(k)$  of
the trivial module is a direct summand of $k_B^G$ and so the multiplicity of any irreducible module
$V$ in $P(k)/k$ is precisely $\dim \mathrm{Ext}_G^1(V,k) = \dim H^1(G,V^*)$.  

We start by computing a related quantity.

\begin{lemma} \label{res1}  Let $V$ be an irreducible $kG$-module with $\dim V^B=f > 0$ and $V \not\cong k$, and set
$h := \dim H^1(G,V)$.
%Let $K_V$ be the kernel of $\res_B^G:H^1(G,V) \to H^1(B,V)$, and 
Let $a$ be the dimension of the image of $\res^G_B:H^1(G,V) \to H^1(B,V)$ in $H^1(B,V)$.   
Let $X$ be an indecomposable summand of $k_B^G$ with socle $V$, and 
let $J$ be the indecomposable summand of $k^G_B$ with trivial socle. 
\begin{enumerate}[\rm (i)]
\item  $\dim (X/V)^G = h-a$ and $a \le e/f$.
\item  The image of $\res_B^G:\ext^1_G(V,k) \to \ext^1_B(V,k)$ has dimension $a$ in $\ext^1_B(V,k)$.
\item  There exists a submodule $N$ of $J$ with $N/k$ a direct sum of $h-a$ copies of $V$.
\end{enumerate}
\end{lemma}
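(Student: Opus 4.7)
The plan is to prove (ii), (iii), and (i) in this order, since (ii) feeds into (iii), and (i) uses a parallel but more delicate argument. All three parts rest on applying $\hom_G(V,-)$ or $\hom_G(k,-)$ to short exact sequences inside $M := k_B^G$, together with Shapiro's lemma, the self-duality $V \cong V^*$ and $X \cong X^*$ from Corollary \ref{mult1}, and Proposition~3.1 of \cite{GT}, which gives $V|_B = V^B \oplus [R,V]$ with $R := \bfO_{p'}(B)$ and $H^1(B,[R,V]) = 0$, whence $H^1(B,V) = H^1(B,k) \otimes V^B$.

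For (ii), $V \cong V^*$ yields canonical natural isomorphisms $H^1(H,V) \cong \ext^1_H(V,k)$ for $H \in \{G,B\}$ that intertwine the restriction maps, so both images have the same dimension $a$. For (iii), I apply $\hom_G(V,-)$ to $0 \to k \to J \to J/k \to 0$; since $V \not\cong k$ and $\soc(J) = k$, both $\hom_G(V,k)$ and $\hom_G(V,J)$ vanish, so $\dim \hom_G(V, J/k) = h - \dim \mathrm{im}(\ext^1_G(V,k) \to \ext^1_G(V,J))$. Because $\hom_G(k,M) = k$ by Frobenius reciprocity, the embedding $k \hookrightarrow M$ lies in $J$, so the target image equals the image of $\ext^1_G(V,k) \to \ext^1_G(V,M) \cong \ext^1_B(V,k)$ (via Shapiro), which has dimension $a$ by (ii); hence $\dim \hom_G(V, J/k) = h - a$, and $N$ is taken as the preimage in $J$ of a chosen $V^{\oplus(h-a)} \subseteq \soc(J/k)$.

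For (i), applying $\hom_G(k,-)$ to $0 \to V \to X \to X/V \to 0$ and using $V^G = X^G = 0$ yields $\dim(X/V)^G = h - a^*$, where $a^* := \dim \mathrm{im}(\iota_* \colon H^1(G,V) \to H^1(G,X))$ for the socle embedding $\iota \colon V \hookrightarrow X$. The main obstacle is to identify $a = a^*$. I choose a decomposition $M = X_1 \oplus \cdots \oplus X_f \oplus Y$ with $X_j \cong X$ and no constituent $\cong X$ in $\soc(Y)$, yielding a basis $\bar\iota_1, \ldots, \bar\iota_f$ of $\hom_G(V,M) = (V^B)^*$ adjoint to the embeddings $V \hookrightarrow X_j$. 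By naturality of Shapiro, for each nonzero $\bar\iota \in (V^B)^*$ the composite $H^1(G,V) \to H^1(G,X) \hookrightarrow H^1(G,M) \cong H^1(B,k)$ for the corresponding indecomposable summand isomorphic to $X$ equals $\bar\iota_* \circ \res^G_B$, whose image has the intrinsic dimension $a^*$ independent of $\bar\iota$.

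Writing each $\alpha \in \mathrm{im}(\res)$ as $\sum_j \gamma_j \otimes w_j$ (with $(w_j)$ the dual basis to $(\bar\iota_j)$), the coefficient $\gamma_j = \bar\iota_{j,*}(\alpha)$ lies in the direct summand $H^1(G,X_j)$ of $H^1(G,M) \cong H^1(B,k)$. For a diagonal $\bar\iota = \sum_j c_j \bar\iota_j$ with every $c_j \neq 0$, the contraction $\bar\iota_*(\alpha) = \sum_j c_j \gamma_j$ has component $c_j \gamma_j$ in the direct summand $H^1(G,X_j)$, so it vanishes iff $\alpha = 0$; hence $\bar\iota_*$ is injective on $\mathrm{im}(\res)$ and $\dim \bar\iota_*(\mathrm{im}(\res)) = a$. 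Combined with the intrinsic constancy $\dim \bar\iota_*(\mathrm{im}(\res)) = a^*$, this forces $a = a^*$, proving $\dim(X/V)^G = h - a$. Finally, $\bigoplus_j H^1(G,X_j) \subseteq H^1(G,M) \cong H^1(B,k)$ together with $\dim H^1(B,k) \leq e$ (since $B/R$ has rank at most $e$ as an abelian group) gives $f \dim H^1(G,X) \leq e$, so $a = a^* \leq \dim H^1(G,X) \leq e/f$.
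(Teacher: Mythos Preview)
Your argument is correct, and for parts (i) and (iii) it takes a genuinely different route from the paper's proof, while your (ii) is essentially the same duality observation.

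For (iii) the paper proceeds constructively: from (ii) it knows there is an extension $N$ of $k$ by $V^{\oplus(h-a)}$ that is $B$-split, and then invokes Corollary~\ref{mult1}(ii) to embed this $N$ into $J$. You instead apply $\hom_G(V,-)$ to $0\to k\to J\to J/k\to 0$ and identify, via the unit of the $(\res,\Ind)$ adjunction and Eckmann--Shapiro, the connecting map $\ext^1_G(V,k)\to\ext^1_G(V,J)\hookrightarrow\ext^1_G(V,M)\cong\ext^1_B(V,k)$ with restriction; this yields $\dim\hom_G(V,J/k)=h-a$ directly and avoids the explicit embedding lemma entirely. This is slicker.

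For (i) the paper again argues by explicit submodules: it builds $D_0\subseteq D=\Der(G,V)$ with $D_0/V\cong k^{\oplus(h-a)}$ and $D_0|_B$ split, embeds $D_0$ in $X$ via Corollary~\ref{mult1}(ii) to get $\dim(X/V)^G\ge h-a$, and for the reverse inequality shows, using \cite[Prop.~3.1(ii)]{GT}, that any $Y\subseteq X$ with $Y/V\cong k^{\oplus(h-b)}$ is $B$-split, forcing $h-b\le h-a$. Your route is purely cohomological: the long exact sequence gives $\dim(X/V)^G=h-a^*$ with $a^*=\dim\mathrm{im}\,\iota_*$, and you then prove $a^*=a$ by exploiting the decomposition $H^1(G,M)=\bigoplus_jH^1(G,X_j)\oplus H^1(G,Y)\cong H^1(B,k)$ and the naturality of Shapiro. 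The key step---that every nonzero $\bar\iota\in\hom_G(V,M)$ factors through the socle of some summand $\cong X$, so $\dim\mathrm{im}\,\bar\iota_*=a^*$ is intrinsic, and then a diagonal $\phi$ contracts $I=\mathrm{im}(\res)$ injectively because its coordinates land in independent summands---is correct, though more delicate than the paper's argument. Your derivation of $a\le e/f$ from $f\cdot\dim H^1(G,X)\le\dim H^1(B,k)\le e$ matches the paper's \eqref{h11}.

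In short: the paper's approach is constructive and leans on the embedding criterion of Corollary~\ref{mult1}(ii); yours is homological, trading that lemma for careful bookkeeping with Shapiro and the Krull--Schmidt decomposition of $M$. Both reach the same conclusions; yours is cleaner for (iii) and conceptually uniform, while the paper's is more transparent for (i) and yields along the way the auxiliary equalities \eqref{h13}, \eqref{h14} that are reused in the proof of Theorem~\ref{GT1}.
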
 

\begin{proof}  
(a) Let $D:=\Der(G,V)$.  Then $D$ is the (unique) module with socle $V$ and trivial head of
dimension $h$.  By the definition of $a$, there is a subspace $D_0$ with $V \subseteq D_0 \subseteq D$, 
$\dim D/D_0 = a$,  such that $D_0 \cong V \oplus k^{\oplus (h-a)}$ as $B$-modules.  Of course
$V$ is still the socle of $D_0$. By Corollary \ref{mult1}(ii), $D_0$ embeds in $X$. We may then identify
$D_0$ with a submodule of $X$, and $\soc(D_0)$ with $\soc(X) = V$, and then have 
$$\dim (X/V)^G   \ge \dim (D_0/V)^G = h-a.$$   
Conversely, if $\dim (X/V)^G = h - b$, then there exists a submodule $Y \subseteq X$ with socle $V$ and $Y/V \cong k^{\oplus (h-b)}$.  
Since $V,Y \subseteq X \subseteq k_B^G$, we know by \cite[Proposition 3.1(ii)]{GT} that 
$$\dim Y^B = \dim Y^Q = (h-b)+\dim V^Q = (h-b) + \dim V^B = (h-b)+f.$$   
Next, the $B$-module $Y$ decomposes as $[Y,Q] \oplus Y^Q$, and likewise
$V = [V,Q] \oplus V^Q$. Counting the dimensions, we see that $[V,Q] = [Y,Q]$, and thus 
$$Y = [V,Q] \oplus Y^Q,$$
as $B$-module, with $B$ acting trivially on $Y^Q \supseteq V^Q$. We have therefore shown 
that $Y$ splits as $V \oplus k^{\oplus (h-b)}$ as a $B$-module.  Since $Y$
embeds in $D$, this implies by the definition of $a$ that  $h-b \leq h-a$, whence $\dim (X/V)^G = h-a$ as stated.

By Corollary \ref{mult1}(i), $V$ occurs in the socle of $k^G_B$ with multiplicity $f$. It follows by Lemma \ref{socle} 
that $X$ occurs as a direct summand of $k_B^G$ with multiplicity 
$f$. Hence,
$$f \cdot \dim H^1(G,X) \leq \dim H^1(G,k_B^G) = \dim H^1(B,k) \le e,$$
the latter inequality because the abelian group $B/Q$ has rank $\leq e$. We have shown that 
\begin{equation}\label{h11}
  \dim H^1(G,X) \le e/f.
\end{equation}

\smallskip
(b) We again look at the above constructed submodule $D_0$ of $X$, and consider the short exact sequence 
$$0 \to D_0 \to X \rightarrow X/D_0 \to 0.$$   
This gives rise to the sequence
\begin{equation}\label{h12}
  0 \to H^0(G, X/D_0) \to H^1(G,D_0) \to H^1(G,X).
\end{equation}  
Recall that $D_0/V \cong k^{\oplus (h-a)}$ and $\dim (X/V)^G  = h-a$ as shown in (a).  Together with $H^1(G,k) = 0$, this
implies that $H^0(G, X/D_0)=0$. Using \eqref{h11} and \eqref{h12}, we now see that 
\begin{equation}\label{h13}
  \dim H^1(G,D_0) \leq \dim H^1(G,X) \leq e/f.
\end{equation}  

We  now claim  
\begin{equation}\label{h14}
 \dim H^1(G,D_0)=a 
\end{equation} 
Consider the short exact sequence $0 \rightarrow  V \rightarrow D_0 \rightarrow  k^{\oplus (h-a)} \rightarrow 0$.
Thus, we have
$$0 = H^0(G,D_0) \to H^0(G,k^{\oplus (h-a)})   \to H^1(G,V) \to H^1(G,D_0) \to H^1(G,k^{\oplus (h-a)})=0,$$ 
and the claim follows. Thus, $a \le e/f$ as stated in (i).

\smallskip
(c) Note that the natural isomorphism from $H^1(G,V) = \ext^1_G(k,V)$ to
$\ext^1_G(V^*,k)$ gives an isomorphism of the subspace of each which are trivial on $B$, since if a short exact sequence
splits for $B$, so does its dual. It follows that the image of 
$$\res_B^G:\ext^1_G(V^*,k) \to \ext^1_B(V^*,k)$$
has dimension $a = h-\dim (X/V)^G$.  Now (ii) follows since $V \cong V^*$ by Corollary \ref{mult1}(i).

%%% this is why we work in Ext rather with the modules directly  

\smallskip
(d) By (ii), there exists a $G$-module $N$ with socle $k$ and $N/k \cong V^{\oplus (h-a)}$, such that 
$N \cong k \oplus V^{\oplus (h-a)}$ as $B$-module. 
%As mentioned in the proof of Corollary \ref{mult1}, we also have that $\dim (V^*)^B = f$. 
By Corollary \ref{mult1}(ii), $N$ embeds in $J$.
\end{proof}

%%%%   0 \rightarrow D_0 \rightarrow D \rightarrow k^a \rightarrow 0 gives
%%%   0 \rightarrow k^a \rightarrow H^1(G, D_0) \rightarrow H^1(G,D)=0

\begin{proof}[Proof of Theorem \ref{GT1}]  (i) is established in Corollary \ref{prime to minimal}.   We now prove (iii) and (iv).
By \cite[Corollary 6.5]{GT}, it suffices to consider $V_1, \ldots ,V_m$, pairwise non-isomorphic representatives of 
isomorphism classes of irreducible $kG$-modules $V$ with $V^B \neq 0$ and $V \not\cong k$. Note that $H^1(G,k) = 0$.

Keep the notation as in Lemma \ref{res1}, but with the index $i$ attached to the objects defined for $V_i$.
%and with  $a_i^*$ being the parameter $a$ attached to $V^*_i$.
So $V_i$ is an irreducible $kG$-module, with $\dim V_i^B = f_i > 0$, $h_i = \dim H^1(G,V_i)$,
$J$ is the indecomposable summand of $k_B^G$ with trivial socle, and $N_i$ is a submodule of $J$ with 
$N_i/k \cong V_i^{\oplus (h_i-a_i)}$, and $a_i \leq e/f_i$. Working in $J/k$, we obtain a $G$-submodule $N \supseteq k = \soc(J)$
with
$$N/k \cong \oplus^m_{i=1}V_i^{\oplus (h_i-a_i)}.$$ 

Clearly, $\dim H^1(G,N/k)=\sum_ih_i(h_i-a_i)$.  By considering 
$$0 \to k \rightarrow N \to N/k \to 0,$$
we have 
$$0 = H^1(G,k) \to H^1(G,N) \to H^1(G,N/k) \to H^2(G,k).$$  
Let $\kappa := \dim H^2(G,k)$.
Note that $\kappa = 0$ if $p$ does not divide $|B|$. If $p||B|$, then $\kappa \le 1$ unless $p=2$ and $G$ is of type $D_m$
with $m$ even in which case $\kappa=2$.   Thus,  
\begin{equation}\label{h21}
  \dim H^1(G,N) \ge \sum_ih_i(h_i-a_i) - \kappa.
\end{equation}  
On the other hand, using
$$0 \to N \rightarrow J \rightarrow J/N \to 0,$$ 
we see that 
\begin{equation}\label{h22}
  \dim H^1(G,N) \leq \dim H^1(G,J) + \dim H^0(G, J/N).
\end{equation}  
We do not have a very good control over the last term. But note that 
$$\dim H^0(G, J/N) \leq \dim H^0(Q,J/N)  = \dim H^0(Q,J) - \dim H^0(Q,N).$$
By \cite[Proposition 3.1(ii)]{GT}, $H^0(Q, k_B^G) = H^0(B,k_B^G)$ has dimension $|W|$.   
Let $X_i$ denote an indecomposable summand of $k^G_B$ with socle $V_i$.
We have seen in the proof of Lemma \ref{res1} that $\dim H^0(Q,X_i) \ge f_i + (h_i-a_i)$
and $X_i$ occurs with multiplicity $f_i$ as a summand of $k_B^G$. As $V_i \not\cong k$, we get 
$$\dim H^0(Q,J) \le |W| - \sum_i(f_i^2+f_i(h_i-a_i)).$$
On the other hand, $\dim H^0(Q,N) = 1+\sum_if_i(h_i-a_i)$, 
%The duality in \eqref{h20} implies that 
%$$\sum_if_i(h_i-a_i) = \sum_if_i(h_i-a_i),~~\sum_ih_i(h_i-a_i)= \sum_ih_i(h_i-a_i),$$ 
and so 
$$\dim H^0(Q,J/N) \leq |W| - 1-\sum_i(f_i^2 +2f_i(h_i-a_i)).$$
Also we have that 
$$\dim H^1(G,J) + \sum_if_i\cdot\dim H^1(G,X_i) \leq \dim H^1(G,k_B^G) = \dim H^1(B,k) \le e,$$
and $\dim H^1(G,X_i) \geq \dim H^1(G,D_{0,i}) = a_i$ by \eqref{h13} and \eqref{h14}.  
Hence
$$\dim H^1(G,J) \le e - \sum_ia_if_i.$$
Putting all this in \eqref{h21} and \eqref{h22} yields:
$$\sum_ih_i(h_i-a_i) \le (e-\sum_ia_if_i) + |W|+\kappa-1 -\sum_i(f_i^2 +2f_i(h_i-a_i)).$$
Equivalently,
\begin{equation}\label{h23}
  \sum_i(h_i+f_i)(h_i+f_i-a_i) \leq |W|+e+\kappa-1.
\end{equation}  
In particular, for any $i$ we have 
$$(h_i+f_i)(h_i+f_i-a_i) \leq |W|+e+\kappa-1.$$
As $a_i \leq e/f_i$ by Lemma \ref{res1} and $f_i \geq 1$, we obtain for each $i$ that
$$h_i+f_i \leq \frac{a_i}{2} + \sqrt{\frac{a_i^2}{4} + |W|+e+\kappa-1} \leq  \frac{e}{2} + \sqrt{\frac{e^2}{4} + |W|+e+ \kappa-1} < e +|W|^{1/2}.$$
Thus $h_i < e+|W|^{1/2}-1$, as stated in (iii).

In general, 
$$(h_i+f_i)(h_i+f_i-a_i) \geq (h_i+f_i)(h_i+f_i-e/f_i) \geq (h_i+1)(h_i+1-e),$$
and so \eqref{h23} implies (iv).
 \end{proof}

A much simpler version of the previous proof gives a slightly better bound if $p \nmid |B|$. 
In that case  $H^1(B,V)=0=H^1(B,k)$ and $\kappa = 0$.    This yields Theorem \ref{GT1}(ii):
  
\begin{thm} \label{h1prime2borel}  If $p$ does not divide $|B|$,  then 
$\dim H^1(G,V) < |W|^{1/2}$.
\end{thm}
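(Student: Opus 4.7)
The plan is to specialize the argument proving Theorem \ref{GT1}(iii) to the case $p \nmid |B|$, where several cohomological error terms collapse. As in that proof, I first reduce: by \cite[Theorem 6.1]{GT}, the case $V^B = 0$ yields $H^1(G,V) = 0$, and $H^1(G,k) = 0$ since $G$ is perfect, so it suffices to bound $h_i := \dim H^1(G, V_i)$ for pairwise non-isomorphic irreducible $kG$-modules $V_1, \ldots, V_m$ satisfying $V_i^B \ne 0$ and $V_i \not\cong k$; set $f_i := \dim V_i^B \ge 1$.

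Next I would record the three simplifications forced by $p \nmid |B|$. (a) Since $|B|$ is a $p'$-number, $H^1(B, W) = 0$ for every $kB$-module $W$; in particular the dimension $a_i$ from Lemma \ref{res1}, being the image of $\res_B^G : H^1(G, V_i) \to H^1(B, V_i) = 0$, is zero for every $i$. (b) $H^1(B, k) = 0$, so the bound $\dim H^1(G, k_B^G) = \dim H^1(B, k) \le e$ improves to $0$; in particular $\dim H^1(G, J) = 0$, where $J$ is the indecomposable summand of $k_B^G$ with trivial socle. (c) The parameter $\kappa = \dim H^2(G, k)$ is zero, as already recorded in the proof of Theorem \ref{GT1}.

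Then I would run the same exact-sequence machinery. Using Lemma \ref{res1}(iii) with $a_i = 0$ and amalgamating inside $J/k$, I produce a $G$-submodule $N \supseteq \soc(J) = k$ with $N/k \cong \bigoplus_{i=1}^m V_i^{\oplus h_i}$. The sequence $0 \to k \to N \to N/k \to 0$ together with $\kappa = 0$ yields $\dim H^1(G, N) \ge \sum_i h_i^2$; the sequence $0 \to N \to J \to J/N \to 0$ together with $\dim H^1(G, J) = 0$ yields $\dim H^1(G, N) \le \dim H^0(G, J/N)$; and counting $Q$-invariants exactly as in the proof of Theorem \ref{GT1}(iii), specialized to $a_i = 0$, produces
$$\dim H^0(G, J/N) \le |W| - 1 - \sum_i f_i^2 - 2\sum_i f_i h_i.$$
Chaining these inequalities yields $\sum_i (h_i + f_i)^2 \le |W| - 1$, so $(h_i + f_i)^2 < |W|$, and since $f_i \ge 1$ this gives $\dim H^1(G, V_i) < |W|^{1/2} - 1$, which is strictly stronger than the asserted bound.

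The work is essentially bookkeeping, and the only real check is to confirm that each of $a_i$, the $e$-contribution arising from $\dim H^1(B,k)$, and $\kappa$ genuinely collapses under $p \nmid |B|$; the first two are immediate from $B$ having $p'$-order, and the third is already recorded in the paper.
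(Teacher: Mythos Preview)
Your proposal is correct and follows exactly the paper's approach: the paper itself says only that ``a much simpler version of the previous proof'' works because $H^1(B,V)=0=H^1(B,k)$ and $\kappa=0$ when $p\nmid|B|$, and you have carried out precisely that specialization of the Theorem~\ref{GT1} machinery.

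One small correction: your reduction for the case $V^B=0$ cites \cite[Theorem~6.1]{GT} to conclude $H^1(G,V)=0$, but that result (as used in Corollary~\ref{prime to index}) needs $p\nmid[G_i:B]$ for all $i$, which is \emph{not} implied by $p\nmid|B|$. The paper's own proof of Theorem~\ref{GT1} instead invokes \cite[Corollary~6.5]{GT}, which gives $\dim H^1(G,V)\le 1$ in general; since $|W|\ge 2$ this already yields $\dim H^1(G,V)<|W|^{1/2}$ in the $V^B=0$ case. With that citation adjusted, your argument is complete and matches the paper's.
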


 We point out some easy corollaries.
 
\begin{cor}\label{sub1}  
Let $L$ be a $kG$-submodule of $k_B^G$. Then  $\dim H^1(G,L) < |W| + \dim H^1(B,k)$.
\end{cor}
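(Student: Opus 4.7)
The strategy is to apply the cohomology long exact sequence to
$$0 \to L \to M \to M/L \to 0,$$
where $M := k_B^G$. This yields
$$\dim H^1(G,L) \le \dim (M/L)^G + \dim H^1(G,M),$$
and Shapiro's lemma identifies $H^1(G,M) = H^1(G,\Ind_B^G k) \cong H^1(B,k)$, which already accounts for the second summand of the target bound.

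The remaining task is to show $\dim (M/L)^G < |W|$. The key input is that $Q$ is an $\ell$-group with $\ell \ne p$, so $|Q|$ is coprime to $p$ and taking $Q$-fixed points is exact on the short exact sequence above. This gives
$$\dim (M/L)^Q = \dim M^Q - \dim L^Q = |W| - \dim L^Q,$$
using $\dim M^Q = |W|$ from \cite[Proposition~3.1(ii)]{GT} (already invoked in the proof of Theorem~\ref{GT1}). Since $(M/L)^G \subseteq (M/L)^Q$, the problem reduces to showing $\dim L^Q \ge 1$ whenever $L \ne 0$; the case $L = 0$ is trivial.

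To verify $L^Q \ne 0$ for nonzero $L$, I would use a socle argument. Pick any simple $G$-submodule $V$ of $L$, which then lies in $\soc(M)$. The inclusion $V \hookrightarrow \Ind_B^G k$ gives by Frobenius reciprocity a nonzero element of $\hom_B(V,k) = (V^*)^B$, so in particular $(V^*)^Q \ne 0$. Since $|Q|$ is coprime to $p$, complete reducibility over $Q$ gives $\dim V^Q = \dim (V^*)^Q > 0$, whence $L^Q \supseteq V^Q \ne 0$. Assembling the estimates yields
$$\dim H^1(G,L) \le |W| - 1 + \dim H^1(B,k) < |W| + \dim H^1(B,k).$$

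I do not foresee any genuine obstacle here --- the proof is essentially a standard combination of Shapiro's lemma, the long exact sequence, and the coprimality $\gcd(|Q|,p) = 1$. The only delicate point is producing the \emph{strict} inequality, for which the socle observation (ensuring $L^Q \ne 0$) is the indispensable ingredient.
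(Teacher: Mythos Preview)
Your proof is correct and follows the same overall strategy as the paper: apply the long exact sequence to $0 \to L \to k_B^G \to X \to 0$, use Shapiro to identify $H^1(G,k_B^G)\cong H^1(B,k)$, and bound $\dim X^G \le \dim X^Q = |W|-\dim L^Q$.

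The one difference is how the strict inequality is obtained. The paper distinguishes two cases: if $L^G\neq 0$ then $L^Q\neq 0$ is immediate; if $L^G=0$ then the paper instead replaces $k_B^G$ by the complement $Z$ of the indecomposable summand $J$ with trivial socle (into which $L$ embeds since $L\cap J=0$), and gets strictness from $\dim Z^Q \le |W|-1$. Your Frobenius reciprocity argument --- showing directly that any simple $G$-submodule $V$ of $k_B^G$ has $V^Q\neq 0$, via $(V^*)^B\neq 0$ and $\dim V^Q=\dim(V^*)^Q$ for the $p'$-group $Q$ --- handles both cases at once and is slightly cleaner.
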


\begin{proof}  This follows from the long exact sequence in cohomology applies to 
$$0 \rightarrow L \rightarrow k_B^G \rightarrow X \rightarrow 0.$$  
If $L^G \neq 0$, then this gives
$\dim H^1(G,L) \le \dim H^0(G,X) + \dim H^1(G,k_B^G)$, and the result holds since 
$$\dim H^0(G,X) \le \dim H^0(Q,X) < \dim H^0(Q,k^G_B) = |W|.$$   
If $L^G=0$, then replace $k_B^G$ by the sum $Z$ of all indecomposable summands of $k_B^G$ not containing the $G$-fixed space and
argue similarly (noting that $\dim H^1(G,Z) \leq \dim H^1(G,k_B^G)=\dim H^1(B,k)$).
\end{proof} 

If we assume that $p$ does not $|G_i|$ for any $i$, we can
get some stronger results.

\begin{cor}\label{sub2}  
Assume that $p \nmid |G_i|$ for all $i$. Let $L = X/Y$ with $Y < X$ $kG$-submodules of
$k_B^G$.  Then $\dim H^1(G,L) \le \dim L^B = \dim X^B - \dim Y^B  \le |W|$.   Moreover, $\dim H^1(G,X) \le |W|/2$.
\end{cor}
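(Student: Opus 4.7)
The plan is to exploit the hypothesis $p \nmid |G_i|$, which forces $p \nmid |B|$ since $B \leq G_i$. Hence $B$ is a $p'$-group, so $H^j(B,-) = 0$ for all $j \geq 1$ and the functor of $B$-invariants is exact. Applied to $0 \to Y \to X \to L \to 0$, this yields immediately that $\dim L^B = \dim X^B - \dim Y^B$. Frobenius reciprocity gives $\dim (k_B^G)^B = \dim \End_G(k_B^G) = |W|$, whence $\dim X^B \leq |W|$ and therefore $\dim L^B \leq |W|$. The inequality $\dim H^1(G,L) \leq \dim L^B$ is then immediate from Corollary~\ref{prime to minimal}, applied to the $kG$-module $L$.

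For the second claim, the key idea is to combine Corollary~\ref{prime to minimal} with a complementary bound coming from the quotient of $k_B^G$ by $X$. Set $a := \dim X^B$, so that $\dim H^1(G,X) \leq a$ by Corollary~\ref{prime to minimal}. Writing $Y' := k_B^G/X$ and applying $(-)^B$ to $0 \to X \to k_B^G \to Y' \to 0$, the vanishing $H^1(B,X) = 0$ yields $\dim (Y')^B = |W| - a$. Next, by Shapiro's lemma combined with the same $p'$-group argument, we have $H^1(G, k_B^G) = H^1(B,k) = 0$. The long exact sequence in $G$-cohomology therefore gives a surjection $(Y')^G \twoheadrightarrow H^1(G,X)$, so that
$$\dim H^1(G,X) \leq \dim (Y')^G \leq \dim (Y')^B = |W| - a.$$

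Combining the two bounds gives $\dim H^1(G,X) \leq \min(a, |W| - a) \leq |W|/2$, as required. The argument is essentially formal once Corollary~\ref{prime to minimal} and the vanishing of $H^*(B, -)$ in positive degrees are in hand; there is no serious obstacle. The only slightly nontrivial point to spot is the use of the symmetric roles of $X$ and $Y' = k_B^G/X$ inside $k_B^G$ to produce a complementary bound, which is exactly what makes the two estimates average to $|W|/2$.
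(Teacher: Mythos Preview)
Your proof is correct and follows essentially the same route as the paper. The paper invokes Corollary~\ref{prime to index}(i) (with $H^1(B,V)=0$ since $p\nmid|B|$) where you invoke Corollary~\ref{prime to minimal}, and for the second part the paper averages the two bounds $\dim H^1(G,X)\le \dim X^Q$ and $\dim H^0(G,M/X)\le \dim(M/X)^Q$ (using $\dim H^0(G,M/X)=\dim H^1(G,X)$ from the long exact sequence) to reach $|W|/2$, whereas you take the minimum of $a$ and $|W|-a$; these are equivalent packagings of the same idea.
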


\begin{proof} The first statement follows by Corollary \ref{prime to index}(i) since $p \nmid |B|$.    
Now suppose that $X$ is a $kG$-submodule.   Let $M=k_B^G$ as above.  Arguing as in the proof of Corollary \ref{sub1}, we see
that $\dim H^0(G, M/X) = \dim H^1(G,X)$.   We also have that 
$$\dim H^1(G,X) \le \dim X^B = \dim X^Q.$$
Thus, 
$$\begin{aligned}
    \dim H^1(G,X) & = (1/2)(\dim H^1(G,X) + \dim H^0(G,M/X))\\ 
   & \le (1/2) (\dim X^Q + \dim (M/X)^Q) \\
   & = (1/2) \dim M^Q = |W|/2.
\end{aligned}$$
 \end{proof}  

This allows us to say something about $H^2$ (but only for submodules of $k_B^G$).

\begin{cor}\label{sub3} 
Assume that $p \nmid |G_i|$ for all $i$.   Let $L$ be a $kG$-submodule of 
$k_B^G$.   Then $\dim H^2(G,L)  \leq |W| - \dim L^B$.
\end{cor}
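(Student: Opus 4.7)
My plan is to exploit the short exact sequence
\[ 0 \to L \to M \to M/L \to 0 \]
with $M := k_B^G$, and read off a bound on $H^2(G,L)$ from the resulting long cohomology sequence. The key preliminary observation is that the hypothesis $p \nmid |G_i|$ for every minimal parabolic $G_i \supseteq B$ forces $p \nmid |B|$, since $B \leq G_i$. Consequently, by Shapiro's lemma $H^j(G, M) = H^j(B, k) = 0$ for every $j \geq 1$, and in particular $H^2(G, M) = 0$. The long exact sequence therefore supplies a surjection
\[ H^1(G, M/L) \twoheadrightarrow H^2(G, L), \]
so it suffices to bound $\dim H^1(G, M/L)$.

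Next, $M/L$ has the form $X/Y$ with $X = M$ and $Y = L$, i.e.\ a quotient of two $kG$-submodules of $k_B^G$, so Corollary \ref{sub2} applies and delivers
\[ \dim H^1(G, M/L) \leq \dim (M/L)^B = \dim M^B - \dim L^B. \]
Finally, by \cite[Proposition 3.1(ii)]{GT} (as already invoked in the proof of Corollary \ref{sub1}), $\dim M^B = \dim M^Q = |W|$. Combining these estimates yields
\[ \dim H^2(G, L) \leq \dim H^1(G, M/L) \leq |W| - \dim L^B, \]
as required.

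I do not anticipate any serious obstacle: the argument is essentially forced once we observe that $H^2(G, k_B^G)$ vanishes (thanks to $p \nmid |B|$) and that Corollary \ref{sub2} applies to the quotient $M/L$. The only points worth flagging are the implication $p \nmid |G_i|\ \forall i \Rightarrow p \nmid |B|$ (immediate from $B \leq G_i$) and the clean identification $\dim M^B = |W|$, both of which are already in use earlier in the section.
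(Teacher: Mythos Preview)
Your proof is correct and follows essentially the same approach as the paper. The paper uses the identical short exact sequence and Corollary~\ref{sub2}; the only cosmetic difference is that the paper also records $H^1(G,k_B^G)=0$, obtaining an isomorphism $H^2(G,L)\cong H^1(G,X)$ rather than merely a surjection, but for the stated bound your surjection suffices.
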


\begin{proof}   Consider the short exact sequence $0 \rightarrow L \rightarrow k_B^G \rightarrow X \rightarrow 0$.
This yields 
$$0 = H^1(G,k_B^G)  \rightarrow H^1(G,X) \rightarrow H^2(G,L) \rightarrow H^2(G,k_B^G)=0.$$
Thus, $H^2(G,L) \cong H^1(G,X)$ and the previous corollary applies. 
\end{proof}

We can do a bit better for irreducible $kG$-modules with nontrivial $B$-fixed points.

\begin{cor}\label{sub4} 
Assume that $p \nmid |G_i|$ for all $i$. Let $\mathcal{X}$  be the set of isomorphism classes of  
the  nontrivial irreducible $kG$-modules and let $f_V :=\dim V^G$.  Then
$$ \sum_{V \in \mathcal{X}} f_V \cdot \dim H^2(G,V) \leq |W| - \sum_{V \in \mathcal{X}} f_V^2.$$
\end{cor}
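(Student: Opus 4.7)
The plan is to apply Corollary~\ref{sub3} to a carefully chosen submodule of $M := k_B^G$. Read literally, the statement is vacuous since $V^G = 0$ for every $V \in \mathcal{X}$; the intended definition is clearly $f_V := \dim V^B$, matching the usage of $f_V$ throughout this section. With that reading, Frobenius reciprocity gives that each simple $kG$-module $V$ appears in $\soc(M)$ with multiplicity $\dim V^B = f_V$. Since $\soc(M)$ is semisimple, its isotypic decomposition splits canonically as $\soc(M) = k^{\oplus f_k} \oplus L$, where
$$L \;\cong\; \bigoplus_{V \in \mathcal{X}} V^{\oplus f_V}$$
is a $kG$-submodule of $M$.

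Next I would invoke Corollary~\ref{sub3} for this $L$, which yields $\dim H^2(G,L) \le |W| - \dim L^B$. Additivity of cohomology and of $B$-fixed points across the direct sum identifies the two sides as
$$\dim H^2(G,L) \;=\; \sum_{V \in \mathcal{X}} f_V \dim H^2(G,V), \qquad \dim L^B \;=\; \sum_{V \in \mathcal{X}} f_V \cdot \dim V^B \;=\; \sum_{V \in \mathcal{X}} f_V^2,$$
and the stated inequality drops out.

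There is no serious obstacle here. The one subtlety, and the reason a single application of Corollary~\ref{sub3} suffices, is that one must group \emph{all} non-trivial isotypic components of $\soc(M)$ into a single submodule before applying the corollary: treating each $V^{\oplus f_V}$ separately would only give the weaker per-$V$ bound $f_V \dim H^2(G,V) \le |W| - f_V^2$, whose sum over $V$ is much worse than the desired $|W| - \sum_V f_V^2$. The hypothesis $p \nmid |G_i|$ for all $i$ enters only through Corollary~\ref{sub3}.
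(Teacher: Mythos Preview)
Your proposal is correct and matches the paper's proof essentially line for line: the paper also takes $L$ to be the complement to $k$ in $\soc(k_B^G)$, notes that $L \cong \bigoplus_{V \in \mathcal{X}} V^{\oplus f_V}$, and applies Corollary~\ref{sub3} with $\dim L^B = \sum_{V \in \mathcal{X}} f_V^2$. Your observation that $f_V := \dim V^G$ is a typo for $\dim V^B$ is also correct.
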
  

\begin{proof}  Let $L$ be the complement to $k$ in the socle of $k_B^G$.  Then $L$
is the direct sum of $f_V$ copies of each $V \in \mathcal{X}$.   Now apply Corollary \ref{sub3},
noting that $\dim L^B = \sum_{V \in \mathcal{X}} f_V^2$.
\end{proof}

In particular, this implies that if $f := \dim V^B > 0$ and $V$ is an irreducible 
$kG$-module, then $\dim H^2(G,V) < |W|/f$. 

One can weaken the assumption that $p$ does not divide $|G_i|$ and obtain
some weaker results.  Unfortunately, these results do not yield any information
about modules with no $B$-fixed points.

 \section{An Example}
 
 Here we give an easy example showing that one cannot in general bound
 $\dim \mathrm{Ext}^1_G(V,W)$ for $V,W$ faithful absolutely irreducible $G$-modules.
 There are examples known as well using Kazhdan-Lusztig polynomials for 
 $G$ a simple finite group of Lie type and $V, W$ modules in the defining characteristic.
 There are no such examples known in for cross characteristic modules.  We give
 a trivial example for semisimple groups.
 
 Let $G=S_1 \times \ldots \times S_t$ be a direct product of $t$ finite non-abelian
 simple groups. Let $V_i$ be an absolutely irreducible $S_i$-module with
 $\dim \mathrm{Ext}_{S_i}^1(V_i, V_i)=e_i $.  
 Let $V=V_1 \otimes \ldots \otimes V_t$.   Then by the K\"unneth formula, we
 see that $\dim \mathrm{Ext}^1_G (V,V)= \sum_{i=1}^t e_i$.   Since there are
 examples with $e_i > 0$, we see that $\dim \mathrm{Ext}^1_G (V,V)$ can grow arbitrarily large with
 $t$ (but if the sectional rank of the Sylow $p$-groups is bounded, then so is $t$).

\end{document}